  \newcommand{\RR}{\mathbb{R}}
  \newtheorem{theorem}{Theorem}[section]
  \newtheorem{lemma}[theorem]{Lemma}
  \theoremstyle{definition}
  \newtheorem{definition}[theorem]{Definition}
  \newtheorem*{claim*}{Claim}
  \newtheorem{example}[theorem]{Example}
  \newtheorem*{question*}{Question}
  \newtheorem*{answer*}{Answer}
  \newtheorem*{application*}{Application}
  \theoremstyle{remark}
  \newtheorem{remark}[theorem]{Remark}
  \newtheorem*{remark*}{Remark}
  \newcommand{\PSL}{\ensuremath{\operatorname{PSL}}\xspace} 
  \newcommand{\Map}{\ensuremath{\operatorname{Map}}\xspace}    
  \newcommand{\param}{{\mathchoice{\mkern1mu\mbox{\raise2.2pt\hbox{$
  \centerdot$}}
  \mkern1mu}{\mkern1mu\mbox{\raise2.2pt\hbox{$\centerdot$}}\mkern1mu}{
  \mkern1.5mu\centerdot\mkern1.5mu}{\mkern1.5mu\centerdot\mkern1.5mu}}}
  \newcommand{\from}{\colon\thinspace}
  \DeclarePairedDelimiter\abs{\lvert}{\rvert}
\begin{document}

\title{A construction of pseudo-Anosov homeomorphisms using positive twists}

 \author   {Yvon Verberne}
 \address{Department of Mathematics, University of Toronto, Toronto, ON }
 \email{yvon.verberne@mail.utoronto.ca}
 
 
  \date{\today}

\begin{abstract}
We introduce a construction of pseudo-Anosov homeomorphisms on $n$-times punctured spheres and surfaces with higher genus using only sufficiently many positive half-twists. These constructions can produce explicit examples of pseudo-Anosov maps with various number-theoretic properties associated to the stretch factors, including examples where the trace field is not totally real and the Galois conjugates of the stretch factor are on the unit circle.
\end{abstract}

\maketitle

\section{Introduction}

Let $S_{g,n}$ be a surface of genus $g$ with $n$ punctures. The mapping class group, $\Map(S_{g,n})$, is the group of isotopy classes of orientation-preserving homeomorphisms of $S_{g,n}$. The Nielsen-Thurston classification of elements states that each element in $\Map(S_{g,n})$ is either periodic, reducible, or pseudo-Anosov.

Thurston proved the Nielsen-Thurston classification and provided us with the definition of pseudo-Anosov mapping classes \cite{Th}. Thurston defined an element $f \in \Map(S_{g,n})$ to be pseudo-Anosov if there is a representative homeomorphism $\phi$, a number $\lambda > 1$ and a pair of transverse measured foliations $\mathcal{F}^{\mathcal{u}}$ and $\mathcal{F}^{\mathcal{s}}$ such that $\phi(\mathcal{F}^{\mathcal{u}}) = \lambda \mathcal{F}^{\mathcal{u}}$ and $\phi(\mathcal{F}^{\mathcal{s}}) = \lambda^{-1} \mathcal{F}^{\mathcal{s}}$. $\lambda$ is called the \textit{stretch factor} (or dilatation) of $f$, $\mathcal{F}^{\mathcal{u}}$ and $\mathcal{F}^{\mathcal{s}}$ are the \textit{unstable foliation} and \textit{stable foliation}, respectively, and the map $\phi$ is a \textit{pseudo-Anosov homeomorphism}.

\begin{figure}[h]
\begin{picture}(100,160)
\put(-50,-25){\includegraphics{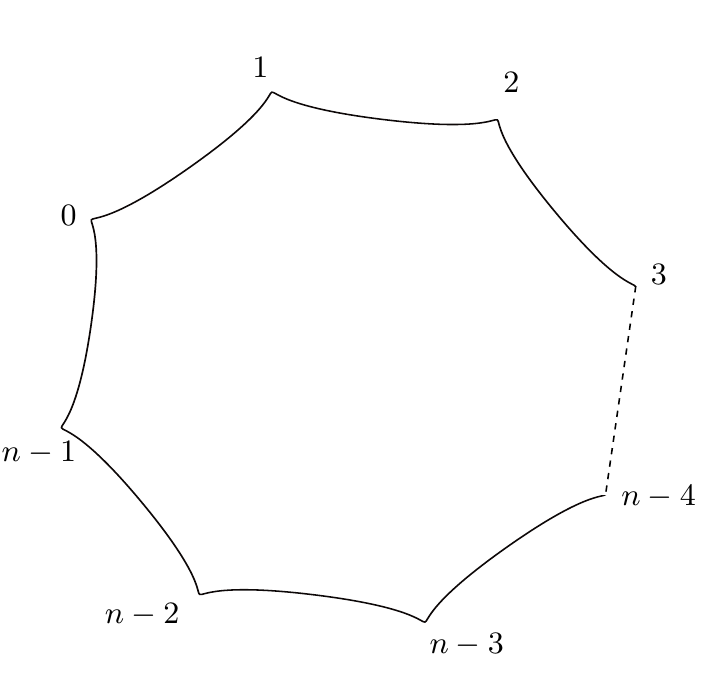}}
\end{picture}
\caption{Labelling of the punctures on the n-times punctured sphere.}
\label{Fig:labellingsphere} 
\end{figure}

In this paper, we will provide a new construction of pseudo-Anosov mapping classes on n-times punctured spheres. In particular, consider the n-times punctured sphere, $S_{0,n}$, with a clockwise labelling of the punctures, as depicted in Figure \ref{Fig:labellingsphere}. 

In this paper, we will consider the following curves on the $n$-times punctured sphere: Consider the plane with $n$ punctures, where the punctures are located at the vertices of a regular $n$-gon. Then for any subset of the punctures, there is a unique isotopy class of curves that surrounds those punctures and is convex in the Euclidean metric. This curve gives a curve on the sphere with $n$ punctures via the stereographic projection. This is well defined up to change of coordinates. Using these curves, it is possible to associate a half-twist to each puncture on $S_{0,n}$.

\begin{definition}
Consider a simple closed curve described above. We say that such a curve $\gamma$ separates punctures $k$ and $l$ from $S_{0,n}$ if one of the subsurfaces obtained by cutting along $\gamma$ contains only the punctures $k$ and $l$, and the other subsurface contains the remaining punctures. Denote the curve separating puncture $j$ and $j-1 \mod n$ by $\alpha_j$. Define the half-twist associated to puncture $j$, denoted $D_{j}$, as the half-twist around $\alpha_j$. Two subsequent half-twists, $D_{j}^2$, is called a Dehn twist.
\end{definition}

In this paper, our choice of a positive half-twist will be a right half-twist.

Consider a set of punctures $p = \{ p_1, \ldots , p_i \}$ such that $|p_k - p_j| \geq 2 \mod n$ for each $j , k \in \{ 1, \ldots , i \}$. Then every curve associated to a puncture from this set is disjoint from any other curve associated to a puncture from this set. As the curves in $p$ are disjoint, we are able to perform the half-twists associated to the curves in $p$ simultaneously as a multi-twist. We denote this multi-twist by $D_p = D_{p_i} \ldots D_{p_1}$. 

Additionally, we define the following map:

\begin{definition}
Define the map $\rho$ as follows:
\begin{equation*}
\begin{aligned}
\rho \from \mathbb{Z}_{n} &\to \mathbb{Z}_{n}\\
j &\mapsto j+1 \mod n.
\end{aligned}
\end{equation*}
This is the map which permutes the numbers $1, \ldots, n$ cyclically.
\end{definition}

Using the above notation, we are able to define pseudo-Anosov maps on $S_{0,n}$ by considering the different partitions of the punctures of the sphere. We recall that a \textit{partition} of a set is a grouping of the set's elements into non-empty subsets in such a way that every element is included in one and only one of the subsets. We define a partition of the punctures, $\mu = \{ \mu_1, \ldots, \mu_k \}$ to be \textit{evenly spaced} if $\rho(\mu_i)=\mu_{i+1}$ for $1 \leq i \leq k$. Notice that if a partition is evenly spaced that $|\mu_i|=|\mu_j|$ for all $\mu_i, \mu_j \in \mu$.

\begin{example}\label{partitioningthepunctures}
Consider the 6-times punctured sphere.
\begin{figure}[h]
\setlength{\unitlength}{0.01\linewidth}
\begin{picture}(45,38)
\put(3,0){\includegraphics{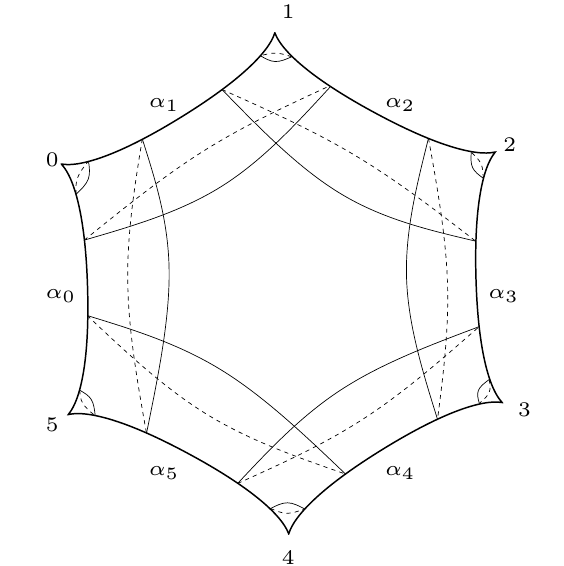}}
\end{picture}
\caption{The 6-times punctured sphere.}
\label{Fig:sixtimespunctured} 
\end{figure}
We label the punctures in a clockwise fashion. Up to spherical symmetry, we notice that we may partition the punctures evenly in two different ways. First, we could place every third puncture in the same set to get $\mu = \{ \{0,3\}, \{1,4\}, \{3,5\}\}=\{\mu_1,\mu_2,\mu_3\}$, and secondly we could place every other puncture in the same set to get $\bar{\mu} = \{ \{0,2,4\}, \{1,3,5\} \} = \{ \bar{\mu}_1, \bar{\mu}_2 \}$.
\end{example}

\begin{theorem}\label{partitions}
Consider the surface $S_{0,n}$. Let $\{ \mu_i \}_{i=1}^{k}$, for $1<k<n$, be an evenly spaced partition of the punctures of $S_{0,n}$. Then 
\[
\phi = \prod_{i=1}^{k} D_{\mu_i}^{q_i} = D_{\mu_k}^{q_k} \ldots D_{\mu_2}^{q_2} D_{\mu_1}^{q_1},
\]
where $q_j = \{q_{j_1}, \ldots q_{j_l} \}$ are tuples of integers greater than one, is a pseudo-Anosov mapping class.
\end{theorem}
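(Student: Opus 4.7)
The plan is to exhibit a train track $\tau$ carrying $\phi$ with non-negative integer transition matrix $M$ that becomes strictly positive after raising to some power; by the classical Bestvina--Handel / Thurston criterion this forces $\phi$ to be pseudo-Anosov, with stretch factor the leading Perron--Frobenius eigenvalue $\lambda>1$ of $M$.

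The first step is to build the track. Because $\{\mu_i\}_{i=1}^k$ partitions all the punctures, the curves $\alpha_0,\dots,\alpha_{n-1}$ all appear among the twisting curves. I would take the union of these curves, smooth each intersection point with cusps chosen consistently with the right half-twist convention, and check that the result $\tau$ is a filling, recurrent, transversely recurrent, trivalent train track whose complementary regions are once-punctured monogons plus possibly one ``outer'' region that can be collapsed to a puncture. Because $\rho$ cyclically permutes the curves, the track $\tau$ inherits a rotational symmetry.

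Next I would analyse the effect of each $D_{\mu_i}^{q_i}$ on $\tau$. Since the curves of $\mu_i$ are carried by $\tau$, iterating the half-twists can be realized by splittings that remain carried in $\tau$. Standard train-track calculus then produces explicit non-negative transition matrices $M_i(q_i)$, linear in the entries of the tuple $q_i$. The transition matrix for $\phi$ is the product
\[
M \;=\; M_k(q_k)\,M_{k-1}(q_{k-1})\cdots M_1(q_1).
\]

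The crux of the argument is to show that some power of $M$ is strictly positive. The hypothesis $q_{j_\ell}\ge 2$ is essential: a single positive half-twist only propagates weight across one cusp, whereas $q_{j_\ell}\ge 2$ forces weight to spread through every cusp adjacent to the curves of $\mu_i$. Combined with the cyclic symmetry of the partition and the hypothesis $1<k<n$ (which guarantees at least two twisting sets whose curves together fill the surface), the successive applications of $M_1,\dots,M_k$ cannot preserve any proper invariant subset of branches, yielding $M^N>0$ for some $N$. Perron--Frobenius theory then provides a unique positive eigenvector with eigenvalue $\lambda>1$ and a dual left eigenvector giving the transverse direction; together they define a pair of transverse measured foliations stretched by $\lambda$ and $\lambda^{-1}$, completing the proof.

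The main difficulty is the strict-positivity step: carefully tracking how each $D_{\mu_i}^{q_i}$ subdivides the branches of $\tau$ and verifying that the evenly-spaced condition together with $q_{j_\ell}\ge 2$ rules out any proper $\phi$-invariant subtrack. The rest---drawing $\tau$, establishing non-negativity of each $M_i$, and extracting the pseudo-Anosov foliations from strict positivity---is routine once the combinatorics of $\tau$ are set down.
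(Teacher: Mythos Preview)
Your high-level plan---exhibit an invariant train track, show the transition matrix is Perron--Frobenius, conclude pseudo-Anosov---is exactly the paper's. The gap is in the train track itself.

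You propose to smooth the union of the twisting curves $\alpha_0,\dots,\alpha_{n-1}$ ``consistently with the right half-twist convention.'' That is Penner's recipe, and it works in Penner's setting because the $A$-twists and $B$-twists have \emph{opposite} signs: at each intersection the cusp can be chosen so that a positive twist about the $A$-curve and a negative twist about the $B$-curve both carry the track. In the present theorem every twist is positive, and adjacent curves $\alpha_j,\alpha_{j+1}$ play symmetric roles; a smoothing making $D_j$ carry the track will generally not make $D_{j+1}$ carry it. You have not said what the smoothing is, and there is no reason to expect a Penner-style choice to exist here. You also describe drawing $\tau$ and verifying invariance as ``routine once the combinatorics are set down''; in this problem that is precisely the non-routine step.

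The paper's track is not obtained by smoothing the $\alpha_j$. It consists of a spine: one branch looping around a distinguished puncture from each set of the partition, all meeting in a central polygon, together with one small branch for each remaining puncture turning left into the spine. The payoff is a single geometric observation that drives the whole proof: each multitwist $D_{\mu_i}^{q_i}$ \emph{rotates} this track clockwise by $2\pi/n$. Invariance of $\tau$ under $\phi$ then follows immediately from the rotational symmetry of the spine, and the Perron--Frobenius property is read off from the explicit local weight update $(w,w')\mapsto\bigl((l{-}1)w+lw',\,lw+(l{+}1)w'\bigr)$ after $l\ge 2$ half-twists, which after one full rotation forces every branch weight to pick up a positive contribution from every other. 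Your positivity sketch (``cannot preserve any proper invariant subset of branches'') is not tied to any concrete combinatorics and would need real work even given a correct track.

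A minor difference at the end: you invoke the Bestvina--Handel/Thurston criterion, whereas the paper proves and applies a separate nesting lemma (\lemref{nesting}) that deduces pseudo-Anosov from the Perron--Frobenius property via Masur--Minsky curve-complex estimates. Either closing argument is acceptable once the correct track and matrix are in hand, but that is exactly what your proposal is missing.
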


\begin{example}
Returning to the partitions from Example \ref{partitioningthepunctures}, we notice that Theorem \ref{partitions} tells us that the maps $\phi = D_{5}^2D_{2}^2D_{4}^2D_{1}^2D_{3}^2D_{0}^2=D_{\mu_3}^2D_{\mu_2}^2D_{\mu_1}^2$, and $\bar{\phi} = D_{5}^2D_{3}^2D_{1}^2D_{4}^2D_{2}^2D_{0}^2=D_{\bar{\mu}_2}^2D_{\bar{\mu}_1}^2$ are pseudo-Anosov mapping classes. We will prove this in detail in Section \ref{Sec:Examples}.
\end{example}

One can perform a modification of the partitions of Theorem \ref{partitions} in order to construct additional pseudo-Anosov mapping classes.

\begin{theorem}\label{modification}
Consider the surface $S_{0,n}$. Consider one of the maps from Theorem \ref{partitions}, in particular, consider a partition of the $n$ punctures into $1<k<n$ sets $\{ \mu_i \}_{i=1}^{k}$ such that the partition is evenly spaced. We modify this partition to construct a map for the $(n+1)$-times punctured sphere as follows: Create a new partition, $\mu '$, by replacing each label $j \geq k+1$ by $j+1$ to obtain the sets $\{ \mu_i ' \}_{i=1}^{j}$ and setting the $(k+1)^{\text{st}}$ partition to be $\mu_{k+1} ' = \{ k+1\}$. Then 
\[
\phi '= \prod_{i=1}^{k} D_{\mu_i'}^{q_i '} = D_{\mu_{k+1} '}^{q_{k+1} '} D_{\mu_k '}^{q_{k} '} \ldots D_{\mu_2 '}^{q_2 '} D_{\mu_1 '}^{q_{1} '},
\]
where $q_j ' = \{q_{j_1} ', \ldots q_{j_l} ' \}$ are tuples of integers greater than one, is a pseudo-Anosov mapping class.
\end{theorem}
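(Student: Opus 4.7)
The plan is to reduce to \thmref{partitions} by extending its invariant train track to the new surface. The proof of \thmref{partitions} presumably produces an invariant train track $\tau$ on $S_{0,n}$ carrying the curves $\alpha_1,\ldots,\alpha_n$, together with a Perron--Frobenius transition matrix $M_\phi$ recording the action of $\phi$ on edge weights. I would extend $\tau$ to an invariant train track $\tau'$ on $S_{0,n+1}$ and verify the analogous properties for $\phi'$.

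To build $\tau'$, I would insert the new puncture (labelled $k+1$ in the new numbering) between the two consecutive original punctures that bracket it, add the small convex curve $\alpha_{k+1}'$ around it, and attach a local patch of new branches to $\tau$ near $\alpha_{k+1}'$. Because the new puncture is disjoint from every $\mu_i'$ for $i\leq k$ and the relabeling is a purely combinatorial shift, the old curves $\alpha_j$ persist in $\tau'$, and the first $k$ factors of $\phi'$ act on the old part of $\tau'$ exactly as in the proof of \thmref{partitions}. The only new contribution comes from the final singleton factor $D_{\mu_{k+1}'}^{q_{k+1}'}$, which twists the new branches meeting $\alpha_{k+1}'$. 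The transition matrix $M_{\phi'}$ therefore has a block structure: the original matrix $M_\phi$ in the upper-left block, together with new rows and columns, linear in $q_{k+1}'$, recording the interaction of the new branches with the rest of $\tau'$.

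The main obstacle is to verify that $M_{\phi'}$ is still irreducible and aperiodic (equivalently primitive), since the singleton partition breaks the cyclic symmetry of $\rho$ that was presumably exploited in \thmref{partitions}. The hypothesis that every component of $q_{k+1}'$ is at least two should do the work: after a single application of the new twist, positive weight is transferred between the new branches and branches already cyclically linked by $M_\phi$, so a finite power of $M_{\phi'}$ becomes strictly positive. This step should reduce to a finite combinatorial verification of weight flow in the local picture of $\tau'$ around $\alpha_{k+1}'$. Once primitivity is in hand, the Perron--Frobenius theorem produces a dominant eigenvalue $\lambda > 1$ and a positive eigenvector giving an invariant measured foliation $\mathcal{F}^{\mathcal{u}}$ with $\phi'(\mathcal{F}^{\mathcal{u}}) = \lambda\,\mathcal{F}^{\mathcal{u}}$; the transverse foliation $\mathcal{F}^{\mathcal{s}}$ arises from the left Perron eigenvector (or by applying the same argument to $(\phi')^{-1}$), exhibiting $\phi'$ as pseudo-Anosov.
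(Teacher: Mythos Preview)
Your overall strategy---extend the invariant train track from \thmref{partitions} to $S_{0,n+1}$, verify invariance and a Perron--Frobenius transition matrix, then conclude pseudo-Anosov---is the same as the paper's. Two of your specific claims, however, are wrong and would derail the argument.

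First, the assertion that ``the first $k$ factors of $\phi'$ act on the old part of $\tau'$ exactly as in the proof of \thmref{partitions}'' and that $M_{\phi'}$ has $M_\phi$ as an upper-left block is false. The new puncture $k+1$ lies between two consecutive old punctures, so the curve $\alpha_{k+2}'$ (which belongs to one of the first $k$ multitwists after relabelling) already encircles it; the new branch is not isolated from those twists. The paper's mechanism is not a block decomposition but a \emph{rotation}: each $D_{\mu_i'}^{q_i'}$ rotates the entire train track, new branch included, by one click, and after the first $k$ clicks $\tau'$ has not yet returned to itself. The role of the final singleton twist $D_{\mu_{k+1}'}^{q_{k+1}'}$ is precisely to supply the missing click so that $\phi'(\tau')$ is carried by $\tau'$. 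The worked examples confirm this: compare the $6\times 6$ matrix $A$ with the $7\times 7$ matrix $C$; there is no copy of $A$ sitting inside $C$.

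Second, your last paragraph passes directly from a primitive transition matrix to a pair of transverse invariant foliations and declares $\phi'$ pseudo-Anosov. This skips a genuine step: a Perron eigenvector on a carried train track yields an invariant measured lamination, but one must still rule out that it is carried on a proper subtrack or that $\phi'$ fixes a curve. The paper closes this gap with \lemref{nesting}, which uses that $\tau'$ is large, generic, and birecurrent together with the Perron--Frobenius property to force unbounded orbits in the curve complex. Your proposal needs either that lemma or an equivalent criterion (e.g.\ checking that no power of $M_{\phi'}$ has an integral eigenvector, in the Bestvina--Handel style); simply producing the eigenvector is not enough.
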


\begin{theorem}\label{uniqueconstruction}
There exists a pseudo-Anosov mapping class obtained from the construction of Theorem \ref{modification} which cannot be obtained from either the Thurston or the Penner constructions.
\end{theorem}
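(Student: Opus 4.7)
The plan is to exhibit an explicit pseudo-Anosov $\phi'$ produced by the construction of Theorem \ref{modification} whose stretch factor $\lambda$ has Galois-theoretic properties incompatible with both the Thurston and Penner constructions. Concretely, I will aim to find an example for which the trace field $\mathbb{Q}(\lambda+\lambda^{-1})$ is not totally real (ruling out Thurston), while at the same time some Galois conjugate of $\lambda$ lies on the unit circle (ruling out Penner).

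First I would choose a small candidate, for instance on $S_{0,5}$ obtained by modifying an evenly-spaced partition of $S_{0,4}$ via Theorem \ref{modification}, with specific twist exponents chosen large enough that the minimal polynomial of $\lambda$ has degree sufficient to accommodate both a conjugate pair off the unit circle and a conjugate pair on it (at least eight, once one accounts for the reciprocal symmetry $\lambda\leftrightarrow\lambda^{-1}$ and for complex conjugation). Using the action of $\phi'$ on an invariant train track carrying the unstable foliation --- equivalently, on a suitable system of arcs between the punctures --- I would write down the non-negative integer transition matrix $M$ associated to $\phi'$. Its Perron--Frobenius eigenvalue equals $\lambda$, and the minimal polynomial of $\lambda$ appears as a factor of the characteristic polynomial of $M$. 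I would then exhibit two features of the Galois orbit of $\lambda$: a conjugate $\nu$ with $\nu+\nu^{-1}\notin\mathbb{R}$, and a conjugate lying on the unit circle.

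With these two algebraic facts in place, I would invoke two known obstructions. For Thurston's construction, the representation of $\langle T_A,T_B\rangle$ into $\PSL(2,\mathbb{R})$ built from the filling pair of multicurves forces $\lambda+\lambda^{-1}$ to be totally real, so a non-real Galois conjugate of $\lambda+\lambda^{-1}$ immediately rules out Thurston. For Penner's construction, the theorem of Shin--Strenner asserts that no Galois conjugate of a Penner stretch factor lies on the unit circle, so the existence of such a conjugate rules out Penner. Combining these two conclusions gives the statement.

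The main obstacle is the explicit computation of the transition matrix from the modified twist data and the verification that its characteristic polynomial has both types of Galois conjugate simultaneously. This requires a careful choice of train track (or of arcs adapted to the modification) so that the matrix entries admit a clean description, together with a (possibly computer-assisted) algebraic check to locate the roots and confirm the two desired properties. Once the matrix is in hand, what remains is a routine number-theoretic verification.
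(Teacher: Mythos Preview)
Your strategy is exactly the paper's: exhibit an explicit example produced by Theorem \ref{modification}, compute the train-track transition matrix, and verify that the stretch factor simultaneously has a Galois conjugate on the unit circle (ruling out Penner via Shin--Strenner) and a non-totally-real trace field $\mathbb{Q}(\lambda+\lambda^{-1})$ (ruling out Thurston via Hubert--Lanneau). The paper's witness is the map $\psi'$ on $S_{0,8}$ obtained by applying the modification of Theorem \ref{modification} \emph{twice} to the second $S_{0,6}$ example, yielding an $8\times 8$ matrix whose degree-$8$ characteristic polynomial is shown to be irreducible with the two desired properties.

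One concrete correction: your proposed candidate on $S_{0,5}$ cannot succeed, and increasing the twist exponents will not fix it. The degree of the minimal polynomial of $\lambda$ is bounded above by the number of branches of the invariant train track, which is determined by the surface and the combinatorics of the construction, not by the powers $q_i$. To accommodate a real reciprocal pair $\{\lambda,\lambda^{-1}\}$ together with a unit-circle pair and a non-real pair off the circle (so that $\mathbb{Q}(\lambda+\lambda^{-1})$ fails to be totally real), you need the minimal polynomial to have degree at least eight, hence you must work on a sphere with enough punctures; the paper's choice of $S_{0,8}$ is essentially minimal for this purpose.
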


\begin{remark}[The Braid Group]
There is a well defined map from the braid group on $n$ strands, $B_n$, to the mapping class group of a disk with $n$ punctures, $\Map(D_n)$ (see Section 9.1.3 of \cite{FM} for a detailed description). In fact, the map $B_n \to \Map(D_n)$ is an isomorphism. By the Alexander trick, any homeomorphism of the disk which fixes the boundary is isotopic to the identity. Therefore, there is a canonical embedding $B_n \hookrightarrow \Map(S_{n+1})$ , where one of the punctures of $\Map(S_{0,n+1})$ is a marked point \cite{LT}. Additionally, there is the forgetful map from $\Map(S_{0,n+1}) \to \Map(S_{0,n})$, where we ``fill in" the marked point in $S_{0,n+1}$. This is the same forgetful map as in Birman's exact sequence. We are able to compose these two maps to obtain a map from the braid group to the $n$-times punctured sphere. It follows that if a map in $\Map(S_{n})$ is pseudo-Anosov, the corresponding map in $B_n$ is pseudo-Anosov.
\end{remark}

One elementary construction of pseudo-Anosov homeomorphisms was provided by Thurston \cite{Th}. We recall that parabolic isometries correspond to those non-identity elements of $\PSL(2,\mathbb{R})$ with trace $\pm 2$. We recall that $A$ is a multi-curve if A is the union of a finite collection of disjoint simple closed curves in $S$.

\begin{theorem}[Thurston]
Suppose that $A = \{\alpha_1, \ldots, \alpha_n \}$ and $B = \{ \beta_1, \ldots, \beta_m \}$ are multicurves in $S$ so that $A$ and $B$ are filling, that is, $A$ and $B$ are in minimal position and the complement of $A \cup B$ is a union of disks and once punctured disks. There is a real number $\mu = \mu(A,B)$, and a representation $\rho \from \left< D_A, D_B \right> \to \PSL(2, \mathbb{R})$ given by
\begin{equation*}
D_{\alpha} \mapsto \left(
\begin{matrix}
1 & \mu^{1/2}\\
0 & 1 
\end{matrix}
\right)
\end{equation*}
and
\begin{equation*}
D_{\beta} \mapsto \left(
\begin{matrix}
1 & 0\\
-\mu^{1/2} & 1
\end{matrix}
\right).
\end{equation*}
with the following properties:
\begin{enumerate}
\item An element $f \in \left< D_A, D_B \right>$ is periodic, reducible, or pseudo-Anosov according to whether $\rho(f)$ is elliptic, parabolic, or hyperbolic.
\item When $\rho(f)$ is parabolic $f$ is a multitwist.
\item When $\rho(f)$ is hyperbolic the pseudo-Anosov homeomorphism $f$ has stretch factor equal to the larger of the two eigenvalues of $\rho(f)$.
\end{enumerate}
\end{theorem}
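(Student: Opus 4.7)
The plan is to realize the group $\langle D_A, D_B\rangle$ as a group of affine homeomorphisms of a singular flat structure on $S$ and then read off dynamical properties from the derivative representation into $\PSL(2,\RR)$.

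First I would construct a flat (singular Euclidean) structure on $S$ in which each curve of $A$ is horizontal and each curve of $B$ is vertical. Concretely, let $N$ be the $n\times m$ geometric intersection matrix with $N_{ij}=i(\alpha_i,\beta_j)$. Because $A$ and $B$ fill $S$, the matrix $NN^T$ is non-negative and (up to the filling condition) Perron--Frobenius applies, producing a positive eigenvector of eigenvalue $\mu$; let $h=(h_1,\dots,h_n)$ and $v=(v_1,\dots,v_m)$ be the corresponding positive eigenvectors of $NN^T$ and $N^TN$ so that $Nv=\mu^{1/2}h$ and $N^Th=\mu^{1/2}v$. Each component of $S\setminus(A\cup B)$ is a disk (or once-punctured disk) whose boundary alternates between arcs of $A$ and $B$; cross-multiplying the filling condition one sees such a region is a rectangle, and I would assign it horizontal width $v_j$ whenever it lies against $\beta_j$ and vertical height $h_i$ whenever it lies against $\alpha_i$. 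Gluing these rectangles along $A\cup B$ reproduces $S$ with a flat cone metric in which $\alpha_i$ is a horizontal geodesic of length $\mu^{1/2}h_i$ and $\beta_j$ is a vertical geodesic of length $\mu^{1/2}v_j$.

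Next I would show that $D_A$ and $D_B$ admit affine representatives in this structure with the claimed derivatives. The multitwist $D_A$ fixes $A$ pointwise and shears each horizontal strip (a neighborhood of $\alpha_i$) by its vertical height; the Perron--Frobenius normalization forces the shear constant to be the same for every $\alpha_i$ and equal to $\mu^{1/2}$, giving the derivative $\begin{pmatrix}1&\mu^{1/2}\\0&1\end{pmatrix}$. A symmetric argument in the vertical direction gives the derivative of $D_B$. Since an affine homeomorphism is determined up to isotopy by its derivative (and its action on the underlying combinatorics), the map $\rho$ sending a word in $D_A,D_B$ to the product of these matrices in $\PSL(2,\RR)$ is a well-defined homomorphism.

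Finally I would translate the trichotomy in $\PSL(2,\RR)$ to the Nielsen--Thurston trichotomy. If $\rho(f)$ is hyperbolic with eigenvalues $\lambda^{\pm 1}$, the affine representative of $f$ stretches the eigendirections by $\lambda$ and $\lambda^{-1}$, and these eigendirections define a pair of transverse measured foliations on $S$; this exhibits $f$ as pseudo-Anosov with stretch factor $\lambda$. If $\rho(f)$ is parabolic, it fixes a unique direction on the flat surface, and straight-line leaves in that direction form a foliation by closed curves (since $f$ lies in the group generated by two integer Dehn twists, the invariant direction is rational), which the affine map shears; so $f$ is a multitwist along the corresponding multicurve and in particular reducible. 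If $\rho(f)$ is elliptic, a power of $\rho(f)$ is the identity, so a power of $f$ is isotopic to an affine isometry of the flat structure, hence to a finite-order homeomorphism, making $f$ periodic.

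The main obstacle is the geometric construction in the first step: verifying that the rectangles really glue to give $S$ with the correct cone angles (and that punctures show up as infinite ends or marked cone points), and that the shear constant for $D_A$ is uniformly $\mu^{1/2}$ across all horizontal strips. Once the flat structure and the affine representatives are in hand, the identification of the three dynamical types with the three conjugacy classes of matrices is essentially formal from the Nielsen--Thurston classification.
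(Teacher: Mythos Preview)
The paper does not prove this theorem; it is quoted as background from Thurston \cite{Th} (see also Chapter~14 of \cite{FM}), so there is no in-paper proof to compare against. Your outline is the classical argument and is essentially correct in strategy: build a singular flat structure from Perron--Frobenius eigenvectors of the intersection matrix, realize $D_A$ and $D_B$ as affine maps, and read off the Nielsen--Thurston type from the derivative in $\PSL(2,\RR)$.

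One genuine slip in your write-up: the complementary regions of $A\cup B$ are not rectangles. They are $2k$-gons (or once-punctured $2k$-gons) for various $k\ge 2$, and in the flat structure they collapse to cone points of angle $k\pi$ (or punctures). The rectangles in the construction are dual to the \emph{intersection points}: at each point of $\alpha_i\cap\beta_j$ you place a Euclidean rectangle of height $h_i$ and width $v_j$, and glue adjacent rectangles along shared edges of $A$ or $B$. With that correction the eigenvector conditions $Nv=\mu^{1/2}h$ and $N^Th=\mu^{1/2}v$ are exactly what make every horizontal cylinder about $\alpha_i$ have modulus $\mu^{-1/2}$ (and likewise vertically), so the shear for $D_A$ is uniformly $\mu^{1/2}$ as you want. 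Your translation of the elliptic/parabolic/hyperbolic trichotomy is the standard one; the parabolic case needs the observation that the invariant direction has closed leaves because the flat structure is square-tiled up to an affine change (equivalently, the Veech group contains the two parabolics you wrote down), which is worth stating explicitly.
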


After the work of Thurston, Penner gave the following very general construction of pseudo-Anosov homeomorphisms \cite{Pe}:

\begin{theorem}[Penner]
Let $A=\{ a_1, \ldots, a_n \}$ and $B = \{ b_1, \ldots, b_m \}$ be a pair of multicurves on a surface $S$, and suppose that $A$ and $B$ are filling. Then any product of positive Dehn twists about $a_j$ and negative Dehn twists about $b_k$ is pseudo-Anosov provided that all $n+m$ Dehn twists appear in the product at least once.
\end{theorem}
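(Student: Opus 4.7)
The plan is to exhibit a single train track $\tau$ on $S$ which carries every weight produced by each of the generating twists $D_{a_j}$ and $D_{b_k}^{-1}$, to show that these twists act on the cone of transverse weights on $\tau$ by non-negative integer matrices, and finally to apply the Perron--Frobenius theorem to the composed matrix in order to produce invariant measured foliations with stretch factor $\lambda > 1$.

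First I would construct $\tau$ from $A \cup B$ by a smoothing procedure. Near each intersection point of some $a_j$ with some $b_k$, I round off the crossing in one fixed direction (chosen so that the sign convention matches ``positive twist about $a_j$, negative twist about $b_k$''); the result is a trivalent train track whose branches are the arcs of $A \cup B$ cut at the crossings. Because $A \cup B$ fills $S$, the complementary regions of $\tau$ are disks, once-punctured disks, and bigons, so $\tau$ is a bigon track whose cone $W(\tau) \subset \mathbb{R}^{N}_{\geq 0}$ of non-negative transverse weights injects into the space of measured foliations. Next I would check that with this smoothing convention the positive twist $D_{a_j}$ acts on $W(\tau)$ by a matrix of the form $I + P_{a_j}$, where $P_{a_j}$ is a non-negative integer matrix whose support records the intersections of $a_j$ with the curves of $B$; symmetrically, $D_{b_k}^{-1}$ acts as $I + P_{b_k}$. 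Each generating factor is therefore a non-negative matrix that preserves $W(\tau)$.

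The map $\phi$ then acts by a non-negative integer matrix $M$ which is a product of these elementary factors. The main step, and the place where I expect the main obstacle, is to show that some power $M^{N}$ is strictly positive entry-wise; only this lets me invoke the strong form of Perron--Frobenius. Here both hypotheses of the theorem enter: because every twist occurs at least once, each $P_{a_j}$ and each $P_{b_k}$ contributes as a factor of $M$, and because $A \cup B$ fills, the associated incidence directed graph on the branches of $\tau$ is strongly connected, so iterated multiplication propagates positivity to every entry. The delicate point is to make this propagation argument rigorous in the presence of the non-commutativity of the factors, essentially by tracking how repeated applications of the $P$'s transport weight across the entire collection of branches.

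Once $M^{N} > 0$ is established, Perron--Frobenius supplies a unique (up to scale) positive eigenvector $v \in W(\tau)$ with simple leading eigenvalue $\lambda$; strictness of $\lambda > 1$ follows because the elementary factors are not permutation matrices and at least one twist about a curve of each family appears. The vector $v$ determines a transverse measured foliation $\mathcal{F}^{u}$ with $\phi(\mathcal{F}^{u}) = \lambda \mathcal{F}^{u}$, and running the same argument on the dual train track (or on the transpose action) gives the complementary foliation $\mathcal{F}^{s}$ with $\phi(\mathcal{F}^{s}) = \lambda^{-1} \mathcal{F}^{s}$, thereby exhibiting $\phi$ as pseudo-Anosov with stretch factor $\lambda$.
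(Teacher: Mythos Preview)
The paper does not prove this theorem. Penner's theorem is quoted in the introduction as background and attributed to \cite{Pe}; no argument for it appears anywhere in the paper. So there is nothing in the paper to compare your proposal against.

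That said, your sketch is essentially Penner's original strategy: smooth $A\cup B$ into a bigon track $\tau$, observe that each $D_{a_j}$ and $D_{b_k}^{-1}$ acts on weights by a matrix of the form $I+P$ with $P\geq 0$, and argue that the composed matrix is primitive so that Perron--Frobenius yields the invariant foliations. Two points in your outline would need tightening before it counts as a proof. First, a bigon track is not a train track, and weights on it do not embed directly into the space of measured foliations; Penner handles this by collapsing bigons (or equivalently by passing to an honest train track and checking nothing is lost), and you would need to say something here. Second, your justification that some power $M^{N}$ is strictly positive is only a heuristic (``the incidence graph is strongly connected, so positivity propagates''); in Penner's argument this is the substantive combinatorial step, carried out by analysing how the elementary matrices $I+P_{a_j}$ and $I+P_{b_k}$ interact, and it does not follow automatically from connectedness of the intersection graph without tracking the order of the factors. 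Your instinct that this is where the main obstacle lies is correct, but the paragraph you wrote does not yet discharge it.
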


There has been a considerable amount of research regarding the number-theoretic properties of the stretch factors of pseudo-Anosov maps. Hubert and Lanneau proved that for Thurston's construction, the field $\mathbb{Q} (\lambda + 1/\lambda)$ is always totally real \cite{HL}, and Shin and Strenner proved that for Penner's construction the Galois conjugates of the stretch factor are never on the unit circle \cite{SS}.

For the new pseudo-Anosov mapping classes constructed in this paper, it is shown that there is a large variety of number-theoretic properties associated to the stretch factors. In contrast to the above results for the maps from Penner and Thurston's constructions, it is possible to find explicit examples of pseudo-Anosov mapping classes using this new construction so that $\mathbb{Q} (\lambda + 1/\lambda)$ is not totally real and the Galois conjugates of the stretch factors are on the unit circle. However, these results are not able to be generalized as it is also possible to find explicit examples where $\mathbb{Q} (\lambda + 1/\lambda)$ is totally real and the Galois conjugates of the stretch factors are not on the unit circle, where $\mathbb{Q} (\lambda + 1/\lambda)$ is totally real and the Galois conjugates of the stretch factors are on the unit circle, and where $\mathbb{Q} (\lambda + 1/\lambda)$ is not totally real and the Galois conjugates of the stretch factors are not on the unit circle.

\subsection*{Outline} In Section \ref{Sec:Examples} we begin by presenting an explicit example of how to apply Theorem \ref{partitions} to construct two pseudo-Anosov mapping classes on $S_{0,6}$. We then apply Theorem \ref{modification} to these two constructions to construct two pseudo-Anosov mapping classes on $S_{0,7}$. In Section \ref{Sec:Main}, we will provide proofs that these constructions produce pseudo-Anosov mapping classes, as well as a generalization of the nesting lemma from \cite{MM1}. In Section \ref{Sec:Modifications} we will discuss some additional modifications which can be made to construct additional pseudo-Anosov mapping classes. Section \ref{Sec:HigherGenus} will discuss how the construction produces pseudo-Anosov mapping classes on surfaces of higher genus through a branched cover. Lastly, in \ref{Sec:Algebraic} we will provide explicit examples of pseudo-Anosov mapping classes to illustrate the variety of number-theoretic properties associated to the stretch factors, as well as proving that this construction is unique. 

\subsection*{Acknowledgements} I would like to thank Dan Margalit for suggesting I generalize the pseudo-Anosov map from \cite{RV}, Bal\'azs Strenner for suggesting I analyze the number-theoretic properties associated to the stretch factors of the maps produced, and Joan Birman for suggesting I apply the construction to the braid group and to surfaces of higher genus. I would also like to thank Thad Janisse, Chris Leininger, Dan Margalit, Kasra Rafi, Joe Repka, and Bal\'azs Strenner for helpful conversations.

\section{Introductory Examples} \label{Sec:Examples}

We begin by recalling some of the basics of train tracks. See Penner and Harer for a thorough treatment of the topic \cite{PH}.

A train track on a surface $S$ is an embedded 1-complex $\tau$ satisfying the following three properties. 
\begin{enumerate}
\item Each edge (called a branch) is a smooth path with well-defined tangent vectors at the endpoints, and at any vertex (called a switch) the incident edges are mutually tangent. The tangent vector at the switch pointing toward the edge can have two possible directions which divides the ends of edges at the switch into two sets. The end of a branch of $\tau$ which is incident on a switch is called ``incoming" if the one-sided tangent vector of the branch agrees with the direction at the switch and ``outgoing" otherwise. 
\item Neither the set of ``incoming" nor the set of ``outgoing" branches are permitted to be empty. The valence of each switch is at least $3$, except for possibly one bivalent switch in a closed curve component. 
\item Finally, we require the components of $S \backslash \tau$ to have negative generalized Euler characteristic: for a surface $R$ whose boundary consists of smooth arcs meeting at cusps, define $\chi ' (R)$ to be the Euler characteristic $\chi (R)$ minus $1/2$ for every outward-pointing cusp (internal angle $0$), or plus $1/2$ for each inward pointing cusp (internal angle $2\pi$).
\end{enumerate}

A \textit{train route} is a non-degenerate smooth path in $\tau$. A train route traverses a switch only by passing from an incoming to an outgoing edge (or vice-versa). A \textit{transverse measure} on $\tau$ is a function $\mu$ which assigns to each branch a non-negative real number $\mu(b)$ which satisfies the switch condition: For any switch, the sums of $\mu$ over incoming and outgoing branches are equal. A train-track is \textit{recurrent} if there is a transverse measure which is positive on every branch, or equivalently if each branch is contained in a closed train route.

If $\sigma$ is a train track which is a subset of $\tau$ we write $\sigma < \tau$ and say $\sigma$ is a \textit{subtrack} of $\tau$. In this case we may also say that $\tau$ is an \textit{extension} of $\sigma$. If there is a homotopy of $S$ such that every train route on $\sigma$ is taken to a train route on $\tau$ we say $\sigma$ is \textit{carried} on $\tau$ and write $\sigma \prec \tau$.

Let $\alpha$ be a simple closed curve which intersects $\tau$. We say $\alpha$ intersects $\tau$ \textit{efficiently} if $\alpha \cup \tau$ has no bigon complementary regions. A track $\tau$ is \textit{transversely recurrent} if every branch of $\tau$ is crossed by some simple curve $\alpha$ intersecting $\tau$ transversely and efficiently. We call a track \textit{birecurrent} if it is both recurrent and transversely recurrent.

We call a train-track $\tau$ \textit{large} if every component of $S \backslash \tau$ is a polygon or a once-punctured polygon, and we call $\tau$ \textit{generic} if all switches are trivalent.\\

We now present explicit examples on how to use Theorems \ref{partitions} and \ref{modification}. Consider the six-times punctured sphere. By following Theorem\ref{partitions}, we now construct two pseudo-Anosov maps on $S_{0,6}$.

\begin{example}\label{example6punctures}
Consider the six-times punctured sphere and label the punctures of the sphere as introduced in Figure \ref{Fig:labellingsphere}. Up to spherical symmetry, there are two unique partitions of the six punctures so that the labels of the punctures are evenly spaced, namely $\mu = \{ \{0,3\}, \{1,4\}, \{2,5\}\}=\{\mu_1,\mu_2,\mu_3\}$ and $\bar{\mu} = \{ \{0,2,4\}, \{1,3,5\} \} = \{ \bar{\mu}_1, \bar{\mu}_2 \}$. Recall that we define the half-twist associated to puncture $j$ as the half-twist around the curve separating punctures $j$ and $j-1$. Therefore, these partitions can define the two maps, $\phi = D_{5}^2D_{2}^2D_{4}^2D_{1}^2D_{3}^2D_{0}^2=D_{\mu_3}^2D_{\mu_2}^2D_{\mu_1}^2$, and $\bar{\phi} = D_{5}^2D_{3}^2D_{1}^2D_{4}^2D_{2}^2D_{0}^2=D_{\bar{\mu}_2}^2D_{\bar{\mu}_1}^2$, respectively. We will prove that both maps are pseudo-Anosov.

We begin by proving that $\phi = D_{\mu_3}^2D_{\mu_2}^2D_{\mu_1}^2$ is a pseudo-Anosov map on $S_{0,6}$. In order to prove that $\phi$ is a pseudo-Anosov map, we find a train track $\tau$ on $S_{0,6}$ so that $\phi(\tau)$ is carried by $\tau$ and show that the matrix presentation of $\phi$ in the coordinates given by $\tau$ is a Perron-Frobenius matrix.

\begin{figure}[h]
\setlength{\unitlength}{0.01\linewidth}
\begin{picture}(65,20)
\put(0,0){\includegraphics{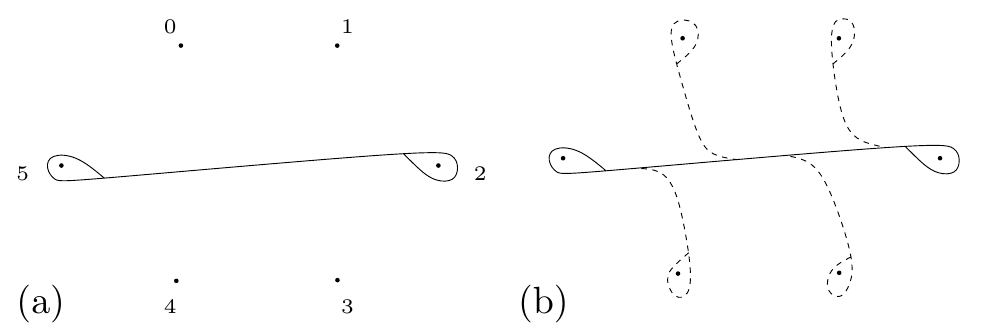}}
\end{picture}
\caption{Constructing the train track for the map $\phi$.}
\label{Fig:pA632construction} 
\end{figure}

We first describe how we construct the train track for the map $\phi$ based on the partition $\mu$. We notice that $\mu$ has three partitions containing two punctures each. Since the punctures in each partition are so that $\abs{i-j}\geq 2 \mod 6$, the twists associated to the punctures in each partition are disjoint. Since there are two twists in each partition and the partition is evenly spaced, the train track has rotational symmetry of order two. Therefore, we will construct a two-valent spine around the punctures labelled $2$ and $5$, as pictured in \ref{Fig:pA632construction} (a). Since there are three partitions, we will have two branches turning tangentially into each of the two nodes on the two-valent spine, where these branches will be turning left towards the spine, as pictured in \ref{Fig:pA632construction} (b).

\begin{figure}[h]
\setlength{\unitlength}{0.01\linewidth}
\begin{picture}(100,85)
\put(-6,-8){\includegraphics{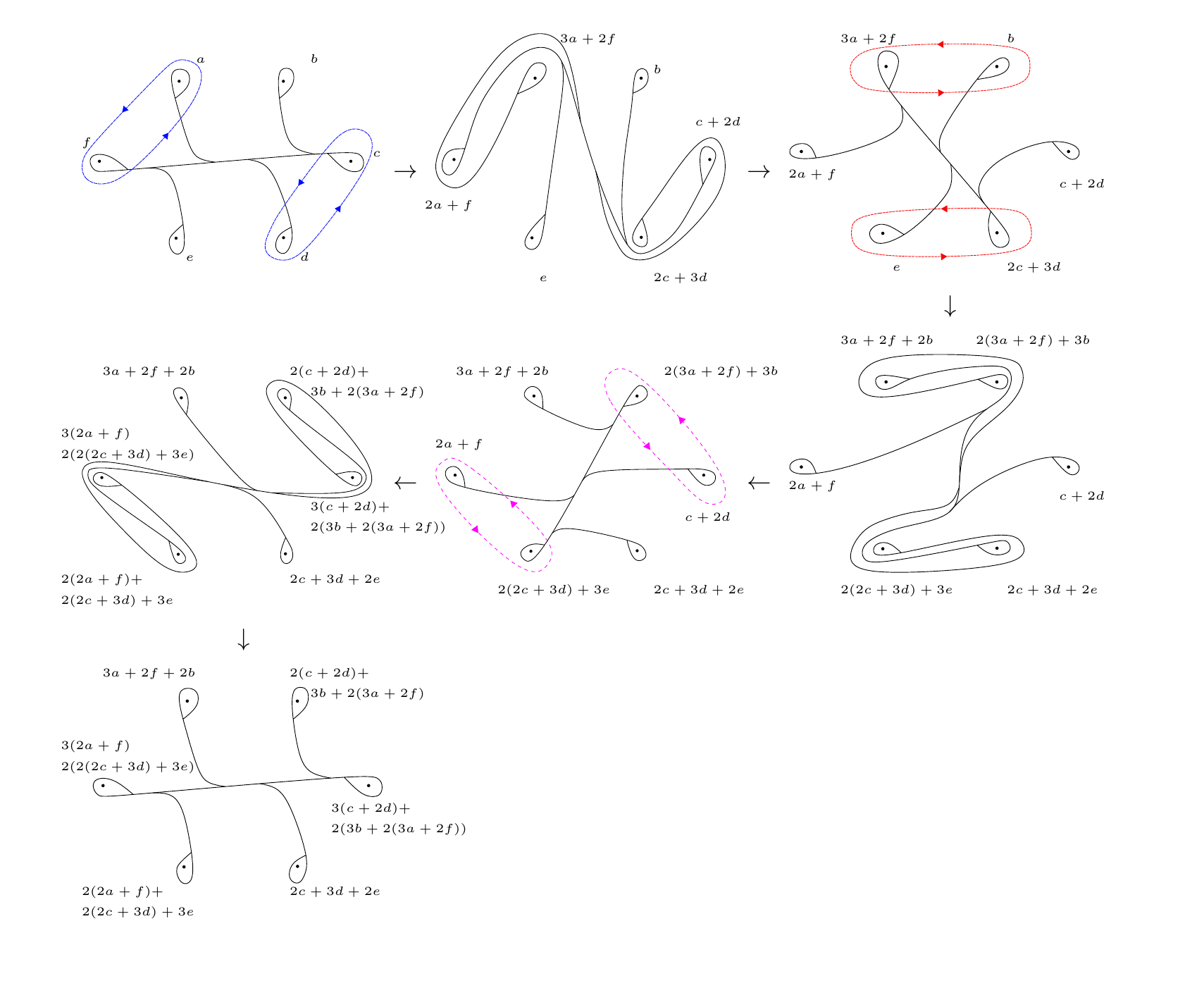}}
\end{picture}
\caption{The train track $\phi(\tau)$ is carried by $\tau$.}
\label{Fig:pA632} 
\end{figure}

The series of images in Figure \ref{Fig:pA632} depict the train track $\tau$ and its images under successive applications of the Dehn twists associated to $\phi$. These images prove that $\phi(\tau)$ is indeed carried by $\tau$, in fact, for every application of $D_{\mu_i}^{2}$, the train track $\tau$ rotates clockwise by $\frac{2\pi}{6}$. By keeping track of the weights on $\tau$, we calculate that the induced action on the space of weights on $\tau$ is given by the following matrix:
\[ A= \left( \begin{array}{cccccc}
3 & 2 & 0 & 0 & 0 & 2 \\
6 & 3 & 2 & 4 & 0 & 4 \\
12 & 6 & 3 & 6 & 0 & 8 \\
0 & 0 & 2 & 3 & 2 & 0 \\
4 & 0 & 4 & 6 & 3 & 2 \\
6 & 0 & 8 & 12 & 6 & 3 \end{array} \right)\] 
Note that the space of admissible weights on $\tau$ is the subset of $\RR^6$ given by positive real numbers $a, b, c, d, e$ and $f$ such that $a+b+f = c+d+e$. The linear map described above preserves this subset. The square of the matrix $A$ is strictly 
positive, which implies that the matrix is a Perron-Frobenius matrix. In fact, the top eigenvalue is $9+4\sqrt{5}$ which is associated to a unique irrational measured lamination $F$ carried by $\tau$ that is fixed by $\phi$. Lastly, since the train track $\tau$ is large, generic, and birecurrent, we are able to apply Lemma \ref{nesting} from Section \ref{Sec:Main} which finishes the proof that this map is a pseudo-Anosov.

Notice that we can perform each of the half twists to any power and still have the exact same train track constructed above. However, the labels associated to the branches affected by the change in the number of twists will subsequently increase or decrease in value according to how many twists are applied to each curve. Since all the twists are positive, we still have that all values in the resulting matrix will be positive and will be a Perron-Frobenius. An application of Lemma \ref{nesting} from Section \ref{Sec:Main} will give our desired result.

We will now show that $\bar{\phi} = D_{\bar{\mu}_2}^2D_{\bar{\mu}_1}^2$ is a pseudo-Anosov on $S_{0,6}$, which follows a similar argument to the above.

\begin{figure}[h]
\setlength{\unitlength}{0.01\linewidth}
\begin{picture}(65,26)
\put(0,0){\includegraphics{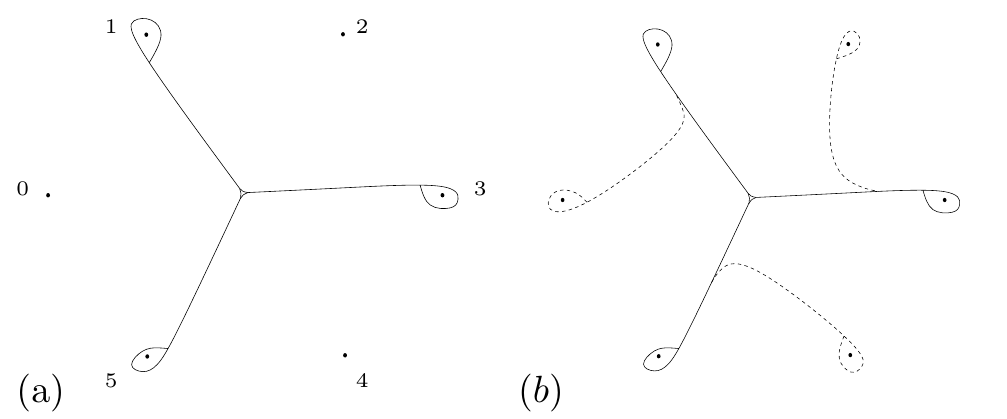}}
\end{picture} 
\caption{Constructing the train track for the map $\bar{\phi}$.}
\label{Fig:pA623construction} 
\end{figure}

We again analyze the partition $\bar{\mu}$ as it determines the construction of our train track $\bar{\tau}$. Notice that $\bar{\mu}$ has two partitions containing three twists each. Since there are three twists in each partition and the partition is evenly spaced, the train track has rotational symmetry of order three. Therefore, we will construct a three-valent spine around the punctures labelled $1$, $3$ and $5$, as pictured in \ref{Fig:pA623construction} (a). Since there are two partitions, we will have one branch turning tangentially towards each of the three nodes on the three-valent spine, where these branches will be turning left towards the spine, as pictured in \ref{Fig:pA623construction} (b).

\begin{figure}[h]
\setlength{\unitlength}{0.01\linewidth}
\begin{picture}(100,64)
\put(-9,-8){\includegraphics{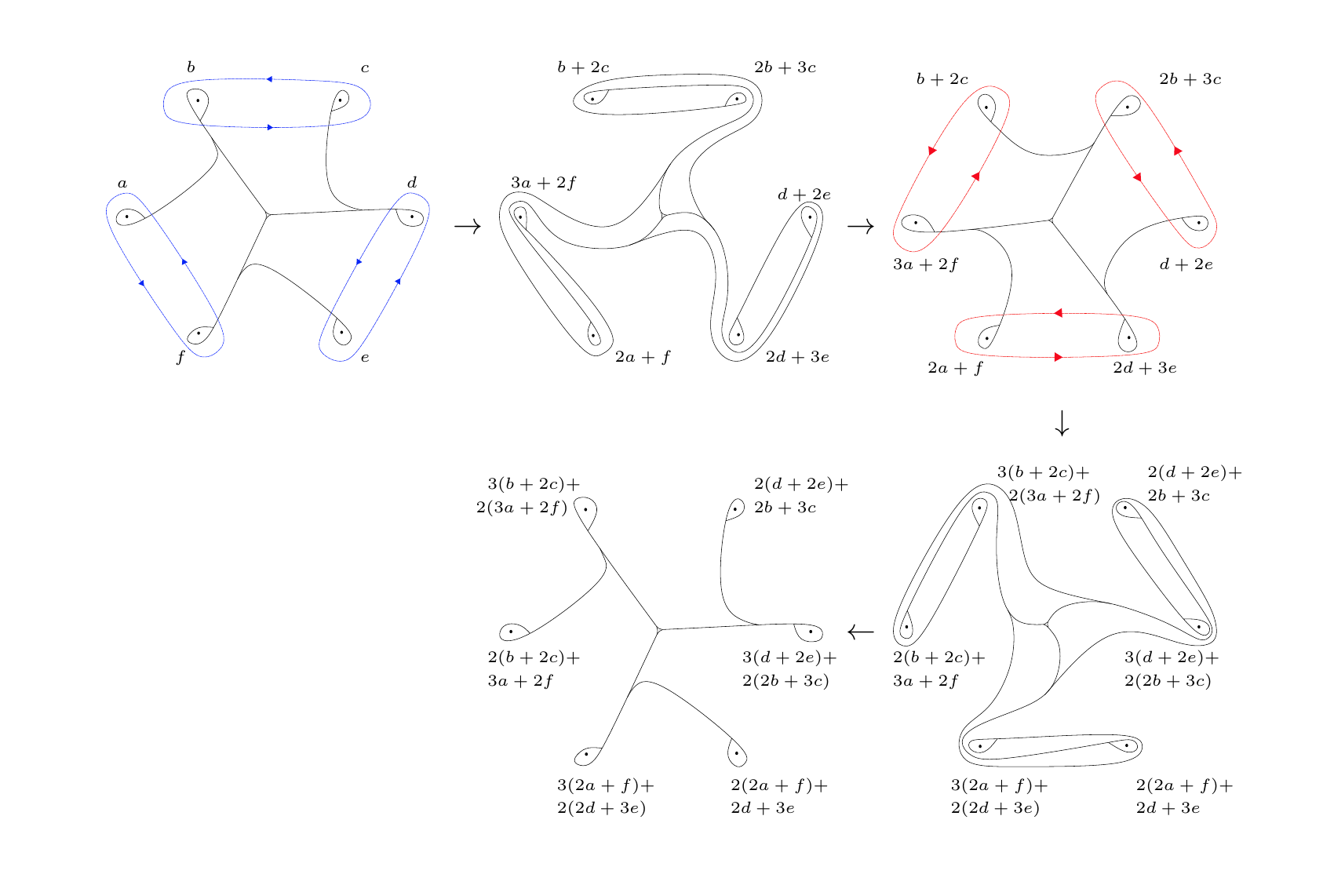}}
\end{picture} 
\caption{The train track $\bar{\phi}(\bar{\tau})$ is carried by $\bar{\tau}$.}
\label{Fig:pA623} 
\end{figure}

The series of images in Figure \ref{Fig:pA623} depict the train track $\bar{\tau}$ and its images under successive applications of the Dehn twists associated to $\bar{\phi}$. These images prove that $\bar{\phi}(\bar{\tau})$ is indeed carried by $\bar{\tau}$. We again notice that for every application of $D_{\bar{\mu}_i}^{2}$, the train track $\bar{\tau}$ rotates clockwise by $\frac{2\pi}{6}$. By keeping track of the weights on $\bar{\tau}$, we calculate that the induced action on the space of weights on $\tau$ is given by the following matrix:
\[ B= \left( \begin{array}{cccccc}
3 & 2 & 4 & 0 & 0 & 2 \\
6 & 3 & 6 & 0 & 0 & 4 \\
0 & 2 & 3 & 2 & 4 & 0 \\
0 & 4 & 6 & 3 & 6 & 0 \\
4 & 0 & 0 & 2 & 3 & 2 \\
6 & 0 & 0 & 4 & 6 & 3 \end{array} \right)\] 
The space of admissible weights on $\bar{\tau}$ is the subset of $\RR^6$ given
by positive real numbers $a, b, c, d, e$ and $f$ such that $b-a, d-c,$ and $f-e$ are all positive and satify the triangle inequalities. The linear 
map described above preserves this subset. The square of the matrix $B$ is strictly 
positive, which implies that the matrix is a Perron-Frobenius matrix. The top eigenvalue is $7+4\sqrt{3}$, which is associated to a unique irrational measured lamination $F$ carried by $\bar{\tau}$ that is fixed by $\bar{\phi}$. As the train track is large, generic, and birecurrent, we may apply Lemma \ref{nesting} to finish the proof that this map is a pseudo-Anosov.
\end{example}

We will now modify the pseudo-Anosov maps from Example \ref{example6punctures} in order to find two pseudo-Anosov maps on the seven-times punctured sphere. To do this, we will apply Theorem \ref{modification} once to each of the maps found in \ref{example6punctures}. For each of these maps, we note that we can apply the modification more than once to obtain additional pseudo-Anosov maps defined on spheres with more punctures.

\begin{example}\label{example7punctures}
We now consider the seven-times punctured sphere and with the labelling introduced in Theorem \ref{partitions}. We will consider two different ``partitions" which are modifications of the partitions from Example \ref{example6punctures}. The two ``partitions" we obtain after applying Theorem \ref{modification} are $\mu ' = \{ \{0,4\}, \{1,5\}, \{2,6\}, \{3\}\}=\{\mu_1 ' , \mu_2 ' ,\mu_3 ' , \mu_4 ' \}$ and $\bar{\mu} ' = \{ \{0,3,5\}, \{1,4,6\}, \{2\} \} = \{ \bar{\mu}_1 ' , \bar{\mu}_2 ' , \bar{ \mu_3} ' \}$.

\begin{figure}[h]
\setlength{\unitlength}{0.01\linewidth}
\begin{picture}(65,29)
\put(0,0){\includegraphics{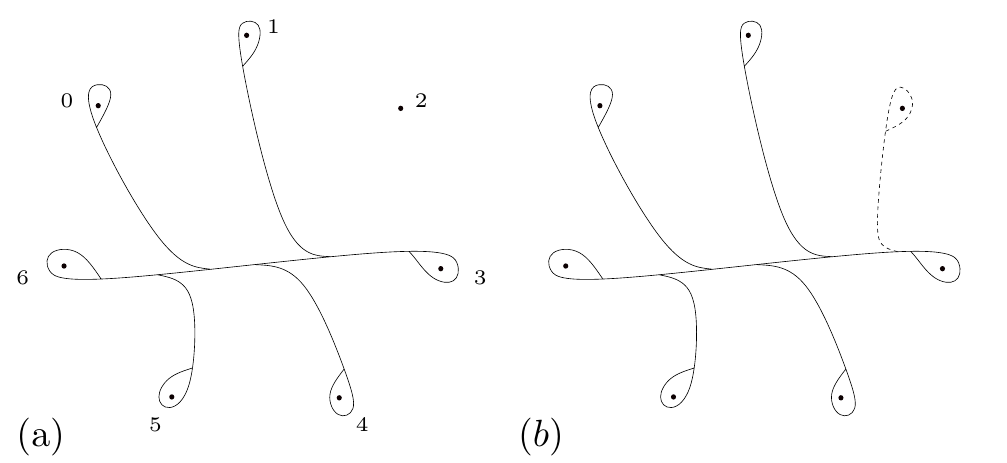}}
\end{picture} 
\caption{Constructing the train track for the map $\psi$.}
\label{Fig:pA732construction} 
\end{figure}

We will begin by proving that $\phi ' = D_{\mu_4 '}^2 D_{\mu_3 '}^2 D_{\mu_2 '}^2 D_{\mu_1 '}^2$ is a pseudo-Anosov on $S_{0,7}$. First, we will describe how to construct the train track for the map $\phi '$, denoted $\tau '$, from the train track $\tau$ associated to the map $\phi$ from the previous example. Consider the train track $\tau$ and place an extra puncture between the punctures labelled $1$ and $2$ in the previous example, and then relabel the punctures so that the labelling is as in Theorem \ref{partitions} (see Figure \ref{Fig:pA732construction} (a)). Therefore, we have a train track without a branch around the puncture labelled $2$, but the rest of the train track is as in Example \ref{example6punctures} (up to relabelling). We construct a branch around the puncture labelled $2$ which will turn tangentially towards the two valent spine, turning left towards the puncture labelled $3$ (see Figure \ref{Fig:pA732construction} (b)).

\begin{figure}[h]
\setlength{\unitlength}{0.01\linewidth}
\begin{picture}(100,110)
\put(-9,-5){\includegraphics{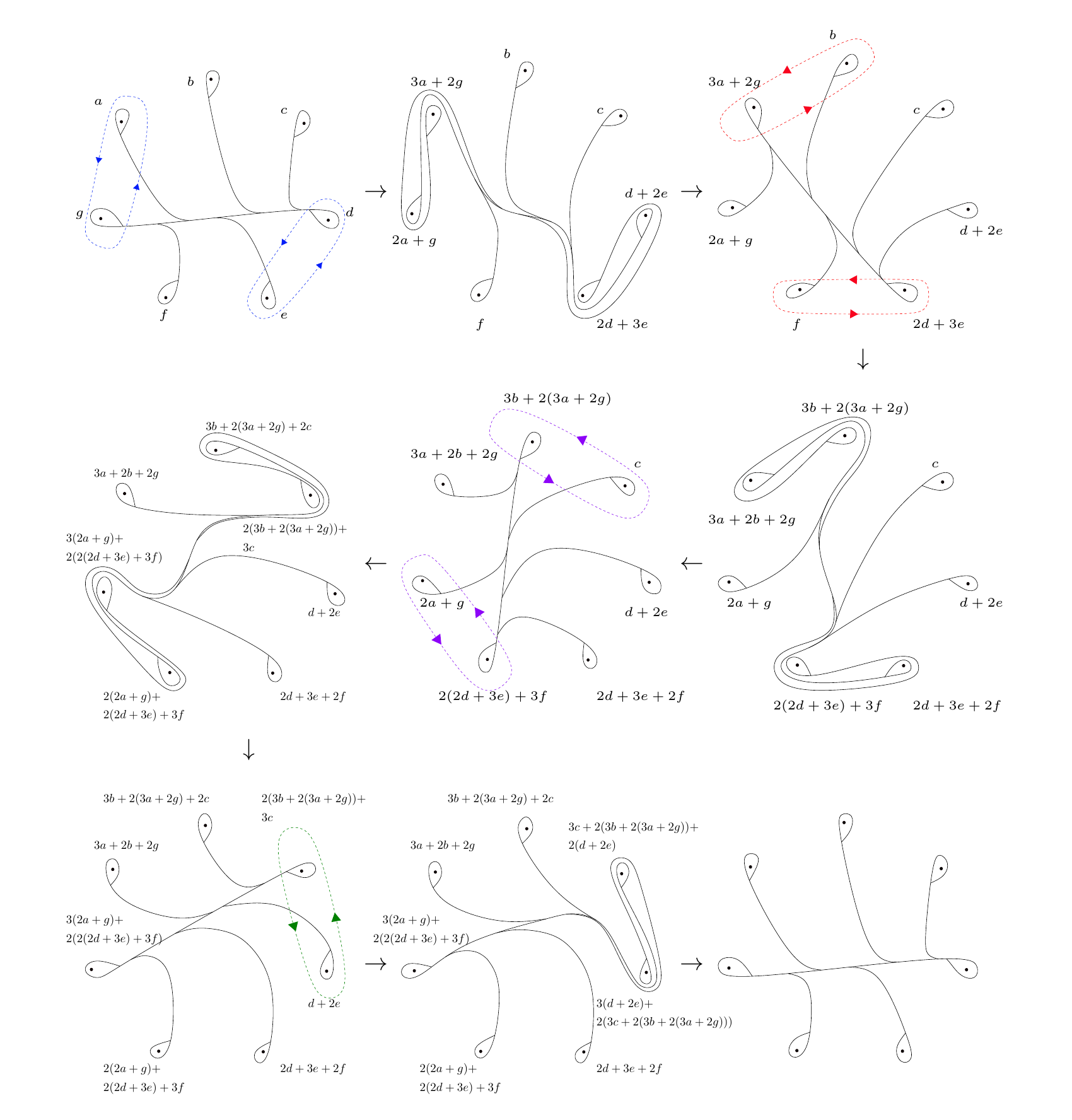}}
\end{picture} 
\caption{The train track $\phi '(\tau ')$ is carried by $\tau '$.}
\label{Fig:pA732} 
\end{figure}

The series of images in Figure \ref{Fig:pA732} depict the train track $\tau '$ and its images under successive applications of the Dehn twists associated to $\phi '$. These images prove that $\phi ' (\tau ')$ is carried by $\tau '$. By keeping track of the weights on $\tau '$, we calculate that the induced action on the space of weights on $\tau '$ is given by the following matrix:
\[ C= \left( \begin{array}{ccccccc}
3 & 2 & 0 & 0 & 0 & 0 & 2 \\
6 & 3 & 2 & 0 & 0 & 0 & 4 \\
12 & 6 & 3 & 2 & 0 & 0 & 8\\
24 & 12 & 6 & 3 & 6 & 0 & 16 \\
0 & 0 & 0 & 2 & 3 & 2 & 0 \\
4 & 0 & 0 & 4 & 6 & 3 & 2 \\
6 & 0 & 0 & 8 & 12 & 6 & 3 \end{array} \right)\] 
The space of admissible weights on $\tau '$ is the subset of $\mathbb{R}^7$ given by the positive real numbers $a, b, c, d, e, f,$ and $g$ such that $a+b+d+f=c+e+g$. The linear map described above preserves this subset. The square of the matrix $C$ is strictly positive, which implies that the matrix is a Perron-Frobenius. Additionally, the top eigenvalue is approximately $22.08646$ which is associated to a unique irrational measured lamination $F$ carried by $\tau '$ which is fixed by $\phi '$. As the train track is large, generic, and birecurrent, we may apply Lemma \ref{nesting} to finish the proof that this map is a pseudo-Anosov.

\begin{figure}[h]
\setlength{\unitlength}{0.01\linewidth}
\begin{picture}(65,29)
\put(0,0){\includegraphics{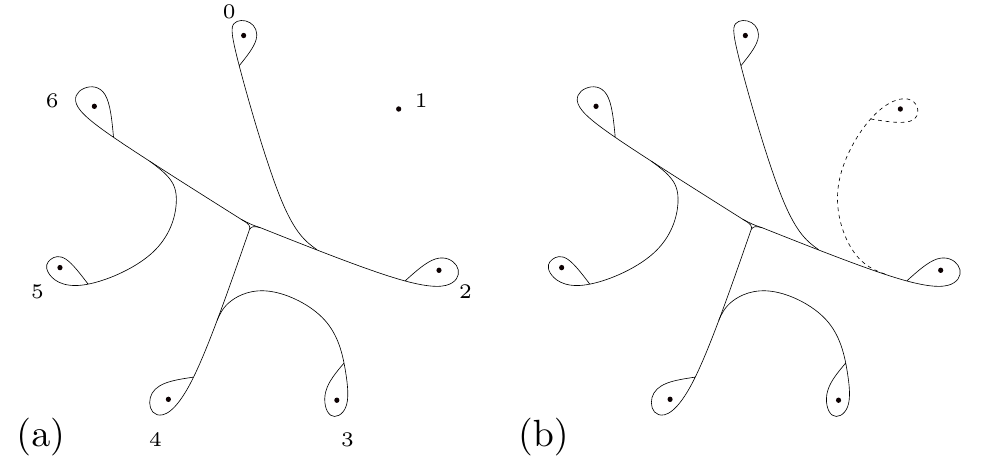}}
\end{picture} 
\caption{Constructing the train track for the map $\bar{\phi '}$.}
\label{Fig:pA723construction} 
\end{figure}

We now show that the homeomorphism $\bar{\tau '} = D_{\bar{\mu_3} '}^2 D_{\bar{\mu_2} '}^2 D_{\bar{\mu_1} '}^2$ is a pseudo-Anosov map. To construct the train track, we will consider the train track $\bar{\tau}$ associated to the map $\bar{\phi}$ from the previous example. Consider the train track $\bar{\tau}$ and place an extra puncture between the punctures labelled $0$ and $1$ in the previous example, and then relabel the punctures so that the labelling is as in Theorem \ref{partitions} (see Figure \ref{Fig:pA723construction} (a)). Therefore, we have a train track on $S_{0,7}$ which does not have a branch around the puncture labelled $1$, and the rest of the train track is as found in Example \ref{example6punctures} (up to relabelling). We then construct a branch around the puncture labelled $1$ which will turn tangentially into the three valent spine, turning left towards the puncture labelled $2$ (see Figure \ref{Fig:pA723construction} (b)).

\begin{figure}[h]
\setlength{\unitlength}{0.01\linewidth}
\begin{picture}(100,110)
\put(-9,-5){\includegraphics{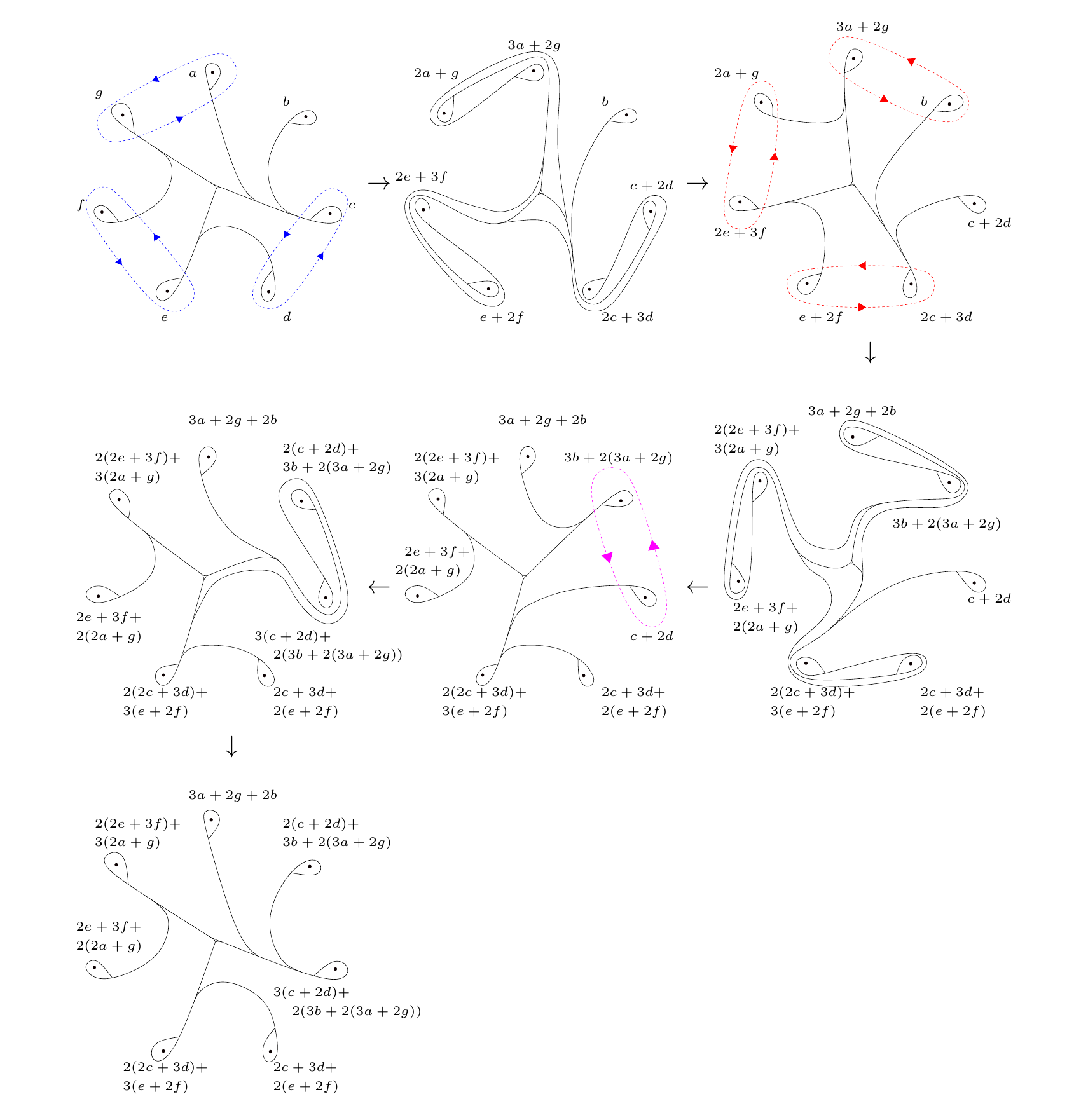}}
\end{picture}
\caption{The train track $\bar{\phi} '(\bar{\tau} ')$ is carried by $\bar{\tau} '$.}
\label{Fig:pA723} 
\end{figure}

The series of images in Figure \ref{Fig:pA723} depict the train track $\bar{\tau '}$ and its images under successive applications of the Dehn twists associated to $\bar{\phi '}$.

Figure \ref{Fig:pA723} shows that $\bar{\phi '}(\bar{\tau '})$ is indeed carried by $\bar{\tau '}$. By keeping track of the weights on $\bar{\tau '}$, we calculate that the induced action on the space of weights on $\bar{\tau '}$ is given by the following matrix:
\[ D= \left( \begin{array}{ccccccc}
3 & 2 & 0 & 0 & 0 & 0 & 2 \\
6 & 3 & 2 & 4 & 0 & 0 & 4 \\
12 & 6 & 3 & 6 & 0 & 0 & 8\\
0 & 0 & 2 & 3 & 2 & 4 & 0 \\
0 & 0 & 4 & 6 & 3 & 6 & 0 \\
4 & 0 & 0 & 0 & 2 & 3 & 2 \\
6 & 0 & 0 & 0 & 4 & 12 & 3 \end{array} \right)\] 
The space of admissible weights on $\bar{\tau '}$ is the subset of $\mathbb{R}^7$ given by the positive real numbers $a, b, c, d, e, f,$ and $g$ such that $c-b-a, e-d,$ and $g-f$ are all positive and satisfy the triangle inequalities. The linear map described above preserves this subset. The square of the matrix $D$ is strictly positive, which implies that the matrix is a Perron-Frobenius. The top eigenvalue of this matrix is 
\[
5+\frac{1}{3}\sqrt[3]{2916-12\sqrt{93}}+\left( \frac{2}{3}\right)^{2/3}\sqrt[3]{243+\sqrt{93}},
\] 
which is associated to a unique irrational measured lamination $F$ carried by $\bar{\tau} '$ which is fixed by $\bar{\phi} '$. As the train track is large, generic, and birecurrent, we may apply Lemma \ref{nesting} to finish the proof that this map is a pseudo-Anosov.
\end{example}


\section{Construction on n-times Punctured Spheres} \label{Sec:Main}

For a recurrent train-track $\tau$, let $P(\tau)$ denote the polyhedron of measures supported on $\tau$. Notice that $P(\tau)$ is preserved by scaling. By $int(P(\tau))$ we denote the set of weights on $\tau$ which are positive on every branch. We say that $\sigma$ \textit{fills} $\tau$ if $\sigma \prec \tau$ and $int (P(\sigma)) \subseteq int (P(\tau))$. Similarly, a curve $\alpha$ fills $\tau$ if $\alpha \prec \tau$ and $\alpha$ traverses every branch of $\tau$.

Let $\sigma$ be a large track. A \textit{diagonal extension} of $\sigma$ is a track $\kappa$ such that $\sigma < \kappa$ and every branch of $\kappa \backslash \sigma$ is a \textit{diagonal} of $\sigma$ ie. its endpoints terminate in the corner of a complementary region of $\sigma$. Let $E(\sigma)$ denote the set of all recurrent diagonal extensions of $\sigma$. Note that it is a finite set, and let $PE(\sigma)$ denote $\bigcup_{\kappa \in E(\sigma)}P(\kappa)$. Let $int(PE(\sigma))$ denote the set of measures $\mu \in PE(\sigma)$ which are positive on every branch of $\sigma$.

We now prove the following lemma, which is similar to the Nesting Lemma from \cite{MM1}. This lemma is the final step of the proof to show that the maps constructed are pseudo-Anosov.

\begin{lemma}\label{nesting}
Let $\tau$ be a large, generic, birecurrent train track. Let $\phi \from S \to S$ be a map which preserves $\tau$. If the matrix associated to $\tau$ is a Perron-Frobenius matrix, then $\phi$ is a pseudo-Anosov map.
\end{lemma}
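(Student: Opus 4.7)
The plan is to produce the expanding invariant measured lamination $F^u$ from the Perron-Frobenius data, show it is arational using largeness and birecurrence, and then obtain the transverse contracting lamination $F^s$ from the dual (transversely recurrent) picture. By the Perron-Frobenius theorem, the transition matrix $A$ of $\phi$ on the branch weights of $\tau$ admits a unique (up to scaling) strictly positive eigenvector $v$ with eigenvalue $\lambda > 1$ equal to the spectral radius. Since the transition map preserves the switch conditions, $v \in \mathrm{int}(P(\tau))$ defines a measured lamination $F^u$, carried by $\tau$ with positive weight on every branch, satisfying $\phi_*(F^u) = \lambda F^u$.

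I would then establish the nesting $\phi^N(PE(\tau)) \subset \mathrm{int}(P(\tau))$ for some large $N$: Perron-Frobenius guarantees that some power $A^N$ is strictly positive, and an elementary computation shows that the image under $\phi^N$ of any measure in a diagonal extension lands in the interior of $P(\tau)$. Combined with largeness of $\tau$, this forces $F^u$ to fill $S$ (its complementary regions are contained in those of $\tau$, which are disks or once-punctured disks) and to be minimal and uniquely ergodic (any alternative invariant sublamination or invariant transverse measure would provide a second nonnegative eigenvector of $A$, contradicting Perron-Frobenius simplicity). Thus $F^u$ is arational, and $\phi$ acts on $\PML(S)$ with $[F^u]$ as an attracting fixed point on the neighborhood of $[F^u]$ spanned by $PE(\tau)$, which by birecurrence is a genuine neighborhood in $\PML(S)$.

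For the stable lamination, I would use the transverse recurrence of $\tau$: the transversely recurrent structure provides a dual cone of transverse measures on which $\phi^{-1}$ acts with a Perron-Frobenius transition matrix (essentially the transpose), yielding a measured lamination $F^s$ with $\phi_*(F^s) = \lambda^{-1} F^s$ as the repelling fixed point. The laminations $F^u$ and $F^s$ are transverse because both are arational with reciprocal stretch factors and hence cannot share a leaf. Producing such a pair verifies Thurston's definition of pseudo-Anosov, completing the proof.

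The main obstacle is passing from the Perron-Frobenius dynamics on $P(\tau)$, where everything is explicit, to a global north-south picture on $\PML(S)$, and in particular constructing $F^s$. One cannot simply apply Perron-Frobenius to $\phi^{-1}$ on $\tau$, since $\phi^{-1}(\tau)$ need not be carried by $\tau$; the transverse recurrence condition is exactly what allows one to pass to the dual object on which such an analysis is valid. I expect this step to mirror the Masur-Minsky nesting lemma essentially verbatim, with $\phi$ in the role of the deformation that realizes the nested inclusions $\phi^{n}(PE(\tau)) \subset \mathrm{int}(P(\tau))$.
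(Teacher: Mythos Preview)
Your proposal takes a genuinely different route from the paper. The paper argues by contradiction through curve-complex geometry: it first observes that the Perron--Frobenius hypothesis forces $\phi(\tau)$ to fill $\tau$, then invokes the Masur--Minsky nesting machinery (Theorem~4.6 and Lemma~3.4 of \cite{MM1}) to obtain $\phi^{k}(PE(\tau)) \subset \mathrm{int}(PE(\tau))$ for some $k$, and iterates this to show that for any curve $\gamma$ one has $d_{C_0(S)}(\gamma,\phi^{jk}(\gamma)) \to \infty$. If $\phi$ were not pseudo-Anosov it would admit a periodic essential curve, contradicting this unbounded growth. No stable lamination is ever constructed; the Nielsen--Thurston trichotomy does the work once unbounded curve-complex translation is established.

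Your approach instead attempts to build $F^u$ and $F^s$ directly and verify Thurston's definition. The expanding side is fine: the Perron eigenvector gives an arational filling lamination carried by the large track. The contracting side is where your sketch thins out. Saying that transverse recurrence ``provides a dual cone on which $\phi^{-1}$ acts with essentially the transpose matrix'' is correct in spirit---this is the tangential-measure picture in Penner--Harer---but it is not a one-line consequence of birecurrence, and you would need to explain why the Perron eigenvector of $A^{T}$ on that cone actually yields a measured lamination $F^s$ transverse to $F^u$ with the claimed contraction. The paper sidesteps this entirely by never needing $F^s$. What your approach buys is a more self-contained, constructive argument that also hands you the stretch factor and foliations explicitly; what the paper's approach buys is brevity, since the heavy lifting is delegated to \cite{MM1} and the Nielsen--Thurston classification.
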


\begin{proof}
Consider $\mu \in int(P(\phi (\tau)))$. Therefore, $\mu$ is a measure which is positive on every branch of $\phi (\tau)$. Since $\phi(\tau)$ preserves $\tau$, this implies that $\mu$ is also a measure which is positive on any branch of $\tau$. Therefore, we have that $int(P(\phi(\tau))) \subseteq int(P(\tau))$, which implies that $\phi(\tau)$ fills $\tau$.

Since $\phi(\tau)$ fills $\tau$, we may follow lines 15-37 of the proof of Theorem 4.6 from \cite{MM1} to show that there exists some $k \in \mathbb{N}$ such that $\phi^{k} (PE(\tau)) \subset int(PE(\tau))$. Similarly, we have that $\phi^{jk}(\tau)$ fills $\phi^{(j-1)k}(\tau)$, which shows that
\begin{equation}\label{subsetPEint}
PE(\phi^{jk}(\tau)) \subset int(PE(\phi^{(j-1)k}(\tau)))
\end{equation}
for any $j$.

By way of contradiction, suppose that $\phi$ is not a pseudo-Anosov map. Then there exists a curve $\alpha$ on the surface $S$ disjoint from $\tau$ such that there exists an $m \in \mathbb{N}$ such that $\phi^{m} (\alpha) = \alpha$.

Since there are elements of $PE(\tau)$ which are not in $\phi^{k}(PE(\tau))$, it is possible to find $\gamma \in C_{0}(S)$ such that $\gamma \notin PE(\tau)$ and $\phi^{k}(\gamma) \in PE(\tau)$. Then $\phi^{2k}(\gamma) \in int(PE(\tau))$, which implies that $d_{C_{0}(S)}(\gamma, \phi^{2k}(\gamma)) \geq 1$ by Lemma 3.4 of \cite{MM1}.

Since $\phi^{jk}(\gamma) \in PE(\phi^{(j-1)k}(\tau))$ for $j \geq 1$, we use Equation \ref{subsetPEint} to find
\begin{equation*}
\begin{aligned}
\phi^{3k}(\gamma) &\in PE(\phi^{2k}(\tau)) \subset int(PE(\phi^{k}(\tau)))\\
\phi^{3k}(\gamma) &\in PE(\phi^{k}(\tau)) \subset int(PE(\tau))\\
\phi^{3k}(\gamma) &\in PE(\tau).
\end{aligned}
\end{equation*}
By Lemma 3.4 of \cite{MM1}, we have that for any $k$, $\mathcal{N}_1(int(PE(\phi^{k}(\tau)))) \subset PE(\phi^{k}(\tau))$. Therefore, we find that
\[
\phi^{3k}(\gamma) \in PE(\phi^{2k}(\tau)) \subset \mathcal{N}_1(int(PE(\phi^{k}(\tau)))) \subset PE(\phi^{k}(\tau)) \subset \mathcal{N}_1(int(PE(\tau))) \subset PE(\tau).
\]
Therefore, since $\gamma \notin PE(\tau)$, we have that $d_{C_{0}(S)}(\gamma, \phi^{3k}(\gamma)) \geq 2$.

One argues inductively to show that $d_{C_{0}(S)}(\gamma, \phi^{jk}(\gamma )) \geq j-1$, which implies that as $j \rightarrow \infty$, $d_{C_{0}(S)}(\gamma , \phi^{jk}(\gamma)) \rightarrow \infty$. Since
\begin{equation}
\begin{aligned}
d_{C_{0}(S)}(\alpha, \phi^{jk}(\alpha)) &\geq d_{C_{0}(S)}(\gamma , \phi^{jk}(\gamma )) - d_{C_{0}(S)}(\alpha, \gamma ) - d_{C_{0}(S)}(h^{jk}(\alpha), h^{jk}(\gamma ))\\
&= d_{C_{0}(S)}(\gamma , \phi^{jk}(\gamma)) - 2 d_{C_{0}(S)}(\alpha, \gamma )
\end{aligned}
\end{equation}
we have $d_{C_{0}(S)}(\alpha, \phi^{jk}(\alpha)) \rightarrow \infty$, which contradicts that there exists some $m$ such that $\phi^{m} (\alpha) = \alpha$. Therefore $\phi$ is a pseudo-Anosov map.
\end{proof}

Now that we have established Lemma \ref{nesting}, we are able to prove Theorem \ref{partitions}.

\begin{proof}[Proof of Theorem \ref{partitions}]
Fix some value of $n \in \mathbb{N}$, and consider the surface $S_{0,n}$. Fix $k>1$, $k \in \mathbb{N}$, and fix a partition $\mu = \{ \mu_1, \ldots, \mu_k \}$ of the $n$ punctures of $S_{0,n}$ such that $\rho(\mu_{i-1}) = \rho(\mu_{(i \mod k)})$. We prove that 
\[
\phi = \prod_{i=1}^{k} D_{\mu_i}^{q_i} = D_{\mu_k}^{q_k} \ldots D_{\mu_2}^{q_2} D_{\mu_1}^{q_1}
\]
is a pseudo-Anosov mapping class.

We first construct the train track $\tau$ so that $\phi(\tau)$ is carried by $\tau$. Consider the partition $\mu = \{ {\mu_1}, \ldots, {\mu_k} \}$. Construct a $k$-valent spine by having a branch loop around the highest labelled puncture in each of the sets $\mu_i$, with each of these branches meeting in the center where they are smoothly connected by a $k$-gon, such as in Figures \ref{Fig:pA632construction} (a) and \ref{Fig:pA623construction} (a). For the remaining labelled punctures in $\mu_i$, loop a branch around each puncture and have this branch will turn left towards the $k$-valent spine meeting the branch of the spine whose label is next in the ordering, such as in Figures \ref{Fig:pA632construction} (b) and \ref{Fig:pA623construction} (b).

For each $k$, $D_{\mu_i}^{q_i}$ acts locally the same. In particular, each half-twist in $D_{\mu_i}^{q_i}$ involves a branch located around puncture $b$ on the $k$-valent spine and the branch located around puncture $b'$ which is directly next to puncture $b$ in the clockwise direction. As we consider a right half-twist to be positive, we notice that the branch at puncture $b$ will begin to turn into the branch at puncture $b'$, see Figures \ref{Fig:pA632} and \ref{Fig:pA623} for examples. Therefore, after the twist, the branch around puncture $b'$ is now on the $k$-valent spine, and the branch around puncture $b$ is directly next to the branch at puncture $b'$ in the counter clockwise direction. Branches which are neither on the $k$-valent spine nor directly clockwise to the $k$-valent spine are unaffected by $D_{\mu_i}^{q_i}$. Thus, after each application of $D_{\mu_i}^{q_i}$, the train track rotates clockwise by $\frac{2\pi}{n}$. Since $\tau$ has a rotational symmetry of order $k$, we notice that $\phi(\tau)$ is carried by $\tau$.

Let $M_{\tau}$ denote the matrix representing the induced action of the space of weights on $\tau$. To prove that $M_{\tau}$ is Perron-Frobenius, fix an initial weight on each branch. For each application of $D_{\mu_i}^{q_i}$, the labels on and directly next in the clockwise direction to the $k$-valent spine will become a linear combination of the labels associated to these two branches. In particular, let $w$ be the weight of a branch on the $k$-valent spine, and let $w'$ be the weight of the branch directly next in the clockwise direction to the branch on the $k$-valent spine. After applying $l$ half-twists, we see that the weight of branch $w$ is $lw'+(l-1)w$ and the weight of branch $w'$ is $(l+1)w'+lw$. Since $\tau$ rotates clockwise by $\frac{2\pi}{n}$ after each application of $D_{\mu_i}^{q_i}$, we know that after $k$ applications of $\phi$ each branch will be a linear combination of the initial weights from each of the branches where the constants of this linear combination will be strictly positive integers. Equivalently, this implies that each entry in $M_{\tau}^{k}$ is a strictly positive integer value. This implies that the matrix $M_{\tau}$ is Perron-Frobenius.

To finish the proof, note that each of the train tracks which were constructed above are large, generic, and birecurrent. Therefore, we apply Lemma \ref{nesting} which completes the proof that $\phi$ is a pseudo-Anosov mapping class.
\end{proof}

We now provide a proof for Theorem \ref{modification}.

\begin{proof}[Proof of Theorem \ref{modification}]
Fix some value of $n \in \mathbb{N}$, some $k>1$, $k \in \mathbb{N}$, and a partition $\mu = \{ \mu_1, \ldots, \mu_k \}$ of the $n$ punctures of $S_{0,n}$ such that $\rho(\mu_{i-1}) = \rho(\mu_{(i \mod k)})$. We perform the modification outlined in the statement of the theorem to obtain a partition, $\mu ' = \{ \mu_1 ', \ldots, \mu_k ', \mu_{k+1} ' \}$, on the $(n+1)$-times punctured sphere, $S_{0,n+1}$, which defines the map 
\[
\phi '= \prod_{i=1}^{k+1} D_{\mu_i}^{q_i '} = D_{\mu_{k+1} '}^{q_{k+1} '} D_{\mu_k '}^{q_k '} \ldots D_{\mu_2 '}^{q_2 '} D_{\mu_1 '}^{q_1 '}.
\]
We prove that $\phi '$ is a pseudo-Anosov mapping class.

We first construct the train track $\tau '$ so that $\phi ' (\tau ')$ is carried by $\tau '$. We begin by considering the train track $\tau$ associated to the map $\phi = \prod_{i=1}^{k} D_{\mu_i}^{q_i} $ defined by the partition $\mu$. We then add in a new puncture onto the sphere between punctures $k$ and $k+1$, and relabel the punctures, see Figures \ref{Fig:pA732construction} (a) and \ref{Fig:pA723construction} (a) for examples. Add a branch from puncture $k+1$ so that it turns tangentially into the $k$-valent spine meeting the same branch on the spine as the branches associated to punctures $1, \ldots, j$, see Figures \ref{Fig:pA732construction} (b) and \ref{Fig:pA723construction} (b) for examples. We denote this modified train track by $\tau '$.

To show that $\phi '(\tau ')$ is carried by $\tau '$, we notice that by the same reasoning in the proof of Theorem \ref{partitions} that for each $1 \leq i < k+1$, the application of $D_{\mu_i '}^{q_i '}$ will rotate the train track clockwise by $\frac{2\pi}{n}$. After the first $k$ applications of $D_{\mu_i '}^{q_i '}$, we have rotated the train-track by $\frac{2\pi k}{n}$, which is not quite $\tau '$. By applying the final twist $D_{\mu_{k +1} '}^{q_{k+1}'}$, we find $\phi ' (\tau ') = \tau '$ and thus $\phi ' (\tau ')$ is carried by $\tau '$. See Figures \ref{Fig:pA732} and \ref{Fig:pA723} for examples.

For the same reasoning as in the proof of Theorem \ref{partitions}, the matrix representing the induced action on the space of weights on $\tau'$ will be a Perron-Frobenius. To finish the proof, we note that each of the train tracks that we have constructed are large, generic, and birecurrent. Therefore, we are able to apply Lemma \ref{nesting} which completes the proof that the map is a pseudo-Anosov mapping class.
\end{proof}


\section{Modifications of Construction} \label{Sec:Modifications}

By considering the constructions described in Section \ref{Sec:Main}, we notice that there are additional modifications one can make to the construction to obtain more pseudo-Anosov mapping classes.\\

The first modification we notice is that it is possible to apply the modification outlined in Theorem \ref{modification} more than once to construct additional pseudo-Anosov maps. In fact, we may continue to apply the modification step ad Infinitum to continue to find maps on punctured spheres.

Furthermore, one can apply the modification step from Theorem \ref{modification} in a slightly different manner in order to construct other pseudo-Anosov maps. Notice that the first application of Theorem \ref{modification} can allow a puncture to be placed between any of the punctures located on the $k$-valent spine and the puncture directly counter clockwise in the labelling. We may modify the initial partition by including a set which contains up to $k-1$ of the $k$ punctures which may be placed between a puncture on the spine and the element directly counter clockwise in the labelling. One may not include all $k$ of the punctures as this map will be equivalent to a map found using Theorem \ref{partitions}. Up to relabelling the punctures, we will have defined a pseudo-Anosov map. The proof to show this is a pseudo-Anosov map follows the proof of Theorem \ref{modification} but with a modified train track where an additional branch turns tangentially into the branch of the $k$-valent spine directly clockwise in the labelling of the punctures.

Both modifications described above can be made ad Infinium to any of the maps described in Theorem \ref{partitions}, as long as any of the new sets created never contain the same number of elements as any of the sets from the initial partition.\\

To obtain a third modification, we consider a map $\phi$ from any of the possible maps found though Theorem \ref{modification} or any of the modifications outlined above. It is possible to find an additional pseudo-Anosov map which has the same train track as $\phi$. Since the train track rotates by $\frac{2\pi}{n}$ for the first $k$ applications of $D_{\mu_i '}^{q_i '}$, we are able to define a map that will continue to rotate the train track by $\frac{2\pi}{n}$ in place of doing the final twist(s) $D_{\mu_{k+1}'}^{q_{k+1} '}$.

For example, consider the first map from Example \ref{example7punctures}. After applying $D_{\mu_3 '}^2 D_{\mu_2'}^2 D_{\mu_1 '}^2$, where $\mu ' = \{ \{1,5\}, \{2,6\}, \{3,7\}\}=\{\mu_1 ' ,\mu_2 ' ,\mu_3 ' \}$, the train track has rotated by $\frac{6 \pi}{7}$. We can apply the rotations associated to punctures $4$ and $1$ next, then around puncture $5$ and $2$, around punctures $6$ and $3$, and finally around punctures $7$ and $4$, which will have rotated our train track by a full rotation. In other words, you will obtain a new ``partition" containing the sets $\mu_i '' = \{i,i+\lceil \frac{7}{3} \rceil \}$ for $1 \leq i \leq 7$. More precisely, we obtain the following additional construction:

\begin{theorem}
Consider the surface $S_{0,n}$. Consider one of the maps from Theorem \ref{partitions}, in particular, consider a partition of the $n$ punctures into $1<k<n$ sets $\{ \mu_i \}_{i=1}^{k}$ such that the partition is evenly spaced. Apply any number of applications of Theorem \ref{modification} to obtain a new ``partition", $\mu ' = \{ \mu_1 ', \ldots, \mu_k ', \mu_{k+1}', \ldots, \mu_{k+l} ' \}$, where $|\mu_{k+j}| < |\mu_{1}|$ for all $1 \leq j \leq l$ which defines a map on the $p$-sphere, where $p = \sum_{i=1}^{k+l} |\mu_{i}|$. Consider the train track $\tau '$ associated to this map. Define the partition $\mu ''$ to be the partition containing the sets $\mu_i '' = \{ i, i+\lceil \frac{p}{k} \rceil\, \ldots, i + (|\mu_{1}|-1) \lceil \frac{p}{k} \rceil \}$, where $1 \leq i \leq p$. $\mu ''$ defines the pseudo-Anosov mapping class
\[
\phi '= \prod_{i=1}^{p} D_{\mu_i ''}^{q_i ''} = D_{\mu_{p} ''}^{q_{p} ''} \ldots D_{\mu_2 ''}^{q_2 ''} D_{\mu_1 ''}^{q_{1} ''}
\]
on $S_{0,p}$, where $q_j '' = \{q_{j_1} '', \ldots, q_{j_l} '' \}$ is the set of powers associated to each $\mu_i ''$.
\end{theorem}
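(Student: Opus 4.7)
The plan is to follow the same three-step framework used in the proofs of Theorems~\ref{partitions} and~\ref{modification}: exhibit a train track that $\phi'(\tau')$ is carried by, verify that the induced action on weights is Perron--Frobenius, and then invoke Lemma~\ref{nesting}. Since the train track $\tau'$ was already constructed during the proof of Theorem~\ref{modification}, the real content here is checking that the rearranged product of multi-twists indexed by $\mu''$ still preserves $\tau'$.

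First, I would keep the same $\tau'$ and argue that each individual multi-twist $D_{\mu_i''}^{q_i''}$ rotates $\tau'$ clockwise by $2\pi/p$, exactly as in the proof of Theorem~\ref{partitions}. The hypothesis $|\mu_{k+j}'| < |\mu_1|$, together with the explicit choice $\mu_i'' = \{i, i+\lceil p/k \rceil, \ldots, i+(|\mu_1|-1)\lceil p/k \rceil\}$, is what ensures that the punctures in each $\mu_i''$ are at least two apart modulo $p$ (so that the multi-twist is well-defined) and that they sit in the same local configuration relative to the $k$-valent spine as the rotation-inducing multi-twists pictured in Figures~\ref{Fig:pA732} and~\ref{Fig:pA723}. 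Composing $p$ such local rotations sweeps through a full $2\pi$, so that $\phi'(\tau') = \tau'$ and in particular $\phi'(\tau')$ is carried by $\tau'$.

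For the Perron--Frobenius step I would repeat the weight-propagation argument from the proof of Theorem~\ref{partitions} verbatim: each right half-twist replaces the weights of the two branches it touches by strictly positive integer linear combinations of those weights, and iterating through all $p$ multi-twists $D_{\mu_i''}^{q_i''}$ mixes every branch weight with every other. Some power of the associated transition matrix $M_{\tau'}$ is therefore strictly positive, giving the Perron--Frobenius property. The train track $\tau'$ is large, generic, and birecurrent by its construction in Theorem~\ref{modification}, so Lemma~\ref{nesting} finishes the proof that $\phi'$ is pseudo-Anosov.

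The main obstacle I anticipate is justifying the rotation claim cleanly when $p$ is not a multiple of $k$, so that $\lceil p/k \rceil \neq p/|\mu_1|$ and the punctures in each $\mu_i''$ are not perfectly equidistant on the sphere. In that case $\tau'$ is itself only approximately rotationally symmetric, since the extra branches inserted by each successive application of Theorem~\ref{modification} break the clean cyclic symmetry of the original train track of Theorem~\ref{partitions}. I would handle this by tracking, puncture by puncture, where each inserted branch sits relative to the $k$-valent spine, and checking that the half-twists in $D_{\mu_i''}^{q_i''}$ always meet the same local picture as the ones analyzed in the proof of Theorem~\ref{partitions}; the ceiling function in the definition of $\mu_i''$ is exactly what is needed to make consecutive multi-twists align with these inserted branches and continue the rotation that the first $k$ multi-twists had begun.
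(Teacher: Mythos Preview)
Your proposal is correct and follows essentially the same approach the paper takes: the paper does not give a formal proof of this theorem but precedes its statement with exactly the informal argument you outline, namely that one reuses the train track $\tau'$ from Theorem~\ref{modification}, observes that each $D_{\mu_i''}^{q_i''}$ rotates $\tau'$ clockwise by $\tfrac{2\pi}{p}$ so that after $p$ multi-twists the track returns to itself, and then appeals to the Perron--Frobenius and Lemma~\ref{nesting} machinery already established. Your identification of the symmetry-breaking issue when $p$ is not a multiple of $k$, and the role of the ceiling in the definition of $\mu_i''$, is more explicit than anything the paper spells out.
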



\section{Construction on Surfaces of Higher Genus} \label{Sec:HigherGenus}

We can lift the constructed pseudo-Anosov mapping classes on $2g+2$-punctured spheres to pseudo-Anosov mapping classes on surfaces of genus $g>0$ through a branched cover. For an overview of Birman-Hilden theory, see \cite{MW}.

\begin{figure}[h]
\setlength{\unitlength}{0.01\linewidth}
\begin{picture}(80,82)
\put(-2,-5){\includegraphics{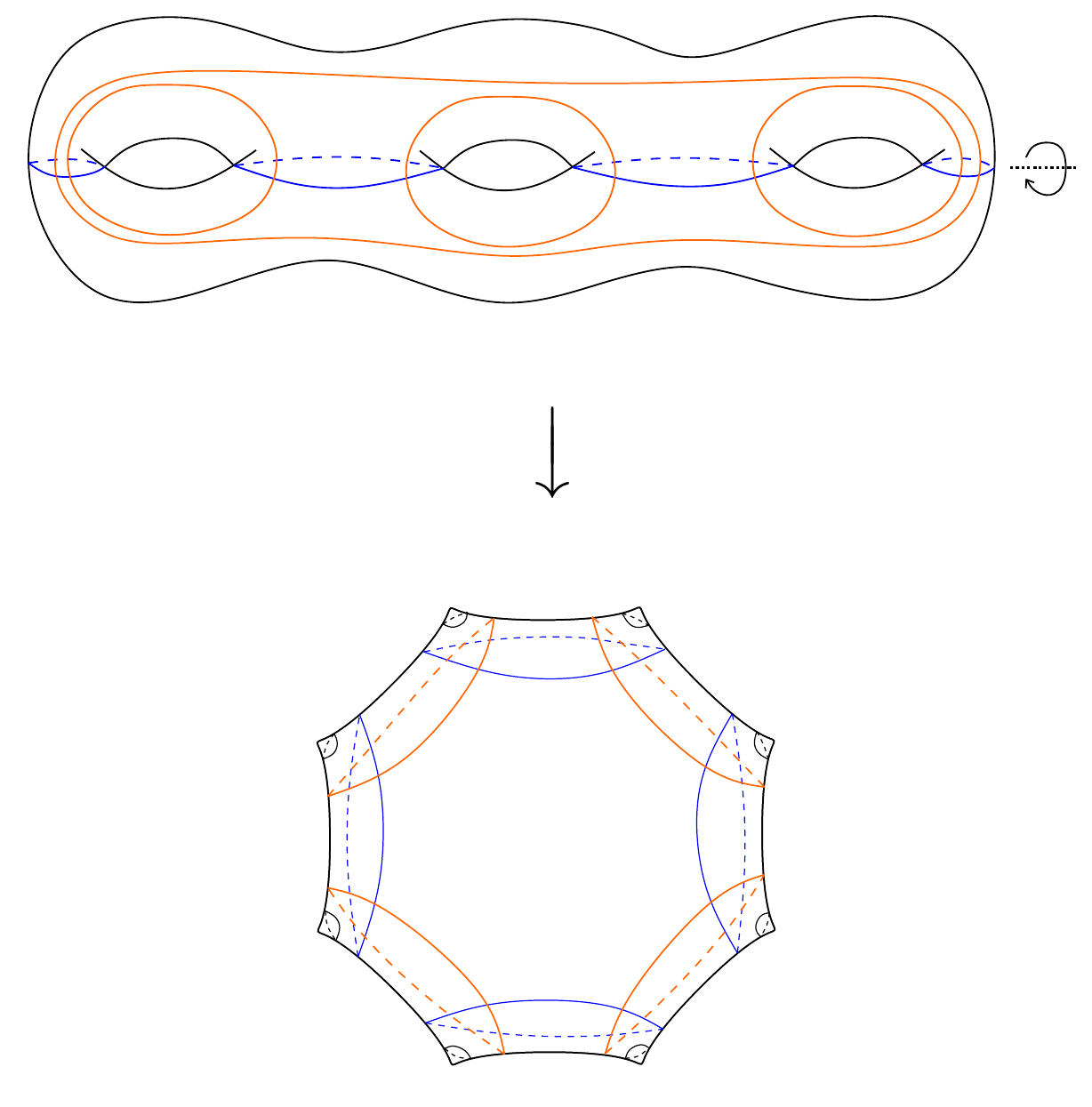}}
\end{picture}
\caption{Two-fold branched covering map from $S_{3,0}$ to $S_{0,8}$.}
\label{Fig:brancheddoublecover} 
\end{figure}

Indeed, $S_{0,2g+2}$ and $S_{g,0}$ are related by a two-fold branched covering map $S_{g,0} \to S_{0,2g+2}$ (see Figure \ref{Fig:brancheddoublecover} for an example). The $2g+2$ punctures on the sphere are the branch points. The deck transformation is the \textit{hyperelliptic involution} of $S_{g,0}$, which we denote $\iota$. Since every element of $\Map(S_{g,0})$ has a representative which commutes with $\iota$, it follows that there is a map 
\[
\Theta \from \Map(S_{g,0}) \to \Map(S_{0, 2g+2}).
\]
The kernel of the map is the cyclic group of order two generated by the involution $\iota$. Each generator for $\Map(S_{0,2g+2})$ lifts to $\Map(S_{g,0})$, so $\Theta$ is surjective. From this we have the following short exact sequence:
\[
1 \to \left< \iota \right> \to \Map(S_{g,0}) \xrightarrow{\Theta} \Map(S_{0,2g+2}),
\]
and therefore a presentation for $\Map(S_{0,2g+2})$ can be lifted to a presentation for $\Map(S_{g,0})$.

However, the map $\Theta$ is not a priori well-defined. The problem is that the elements of $\Map(S_{g,0})$ are only defined up to isotopy and these isotopies are not required to respect the hyperelliptic involution.

Let $p \from S \to X$ be a covering map of surfaces, possibly branched, possibly with boundary. We say that $f \from S \to S$ is \textit{fiber preserving} if for each $x \in X$ there is a $s \in X$ so that $f(p^{-1}(x)) = p^{-1}(y)$, ie. $f$ takes fibers to fibers.

If any two homotopic fiber-preserving mapping classes of $S$ are homotopic through fiber-preserving homeomorphisms, then we say that the covering map $p$ has the \textit{Birman-Hilden property}. Equivalently, whenever a fiber-preserving homeomorphism is homotopic to the identity, it is homotopic to the identity through fiber preserving homeomorphisms \cite{BH}.

\begin{theorem}[Birman-Hilden]\label{BirmanHilden}
Let $p \from S \to X$ be a finite-sheeted regular branched covering map where $S$ is a hyperbolic surface. Assume that $p$ is either unbranched or solvable. Then $p$ has the Birman-Hilden property.
\end{theorem}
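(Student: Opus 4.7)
The plan is to prove that if $f \from S \to S$ is fiber-preserving and isotopic to $\id$, then the isotopy may be chosen fiber-preserving. I would treat the two hypotheses of the theorem separately.

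For the unbranched case, the argument is covering space theory. Since $f$ is fiber-preserving and $p$ is regular, $f$ descends to a homeomorphism $\bar f$ of $X$, and any isotopy $H \from S \cross I \to S$ from $f$ to $\id$ descends to an isotopy $\bar H$ of $X$ from $\bar f$ to $\id_X$. By the homotopy lifting property for unbranched covers, $\bar H$ lifts uniquely to an isotopy of $S$ starting at $f$; by uniqueness of path lifts, this lift must coincide with $H$. Each time-slice $H_t$ then satisfies $p \circ H_t = \bar H_t \circ p$ by construction, so $H_t$ is fiber-preserving, giving the desired fiber-preserving isotopy from $f$ to $\id$.

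For the branched solvable case, I would reduce to the cyclic case by induction on the derived length of the deck group $G$. Lifting to the universal cover $\HH^2$, one argues inductively that an isotopy equivariant with respect to a normal subgroup $N$ of $G$ can be re-isotoped, rel endpoints, to be equivariant with respect to all of $G$, provided $G/N$ is cyclic. This inductive step, the cyclic case, is handled by first removing the branch locus $A = p^{-1}(B) \subset S$, applying the unbranched argument on $S \setminus A$ to produce a fiber-preserving isotopy there, and then extending across $A$ using the local model in which the generator of the cyclic deck group acts as a rotation of a small disk around each branch point.

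The main obstacle is the equivariant extension at branch points. Away from the branch locus, covering space theory does the work; at a branch point one must argue that a rotation-equivariant isotopy on a punctured neighborhood, trivial on the boundary, extends equivariantly to the whole disk while remaining a homeomorphism at every time. This is achieved by an equivariant cone construction centered at the fixed point of the rotation, and it works precisely because the local action is cyclic. Solvability is exploited so that one only ever has to perform this equivariant extension one cyclic factor at a time, which is why the hypothesis cannot be weakened to arbitrary finite groups by this approach.
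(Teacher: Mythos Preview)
The paper does not prove this theorem; it is quoted from \cite{BH} as background and immediately superseded by the Maclachlan--Harvey generalization (\thmref{MacHarv}), which is what the paper actually applies. So there is no proof in the paper to compare your proposal against.

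That said, your unbranched argument contains a genuine circularity. You assert that an arbitrary isotopy $H$ from $f$ to $\id$ descends to an isotopy $\bar H$ of $X$, then lift $\bar H$ back and identify the lift with $H$. But a homotopy $H$ descends through $p$ precisely when each time-slice $H_t$ is fiber-preserving, which is exactly the conclusion you are after. The Birman--Hilden property does not claim that \emph{every} isotopy from $f$ to $\id$ is fiber-preserving (this is false in general), only that \emph{some} such isotopy exists. A correct argument requires real input: one typically shows that the projected map $\bar f$ is itself isotopic to $\id_X$ --- for instance by lifting to $\HH^2$, observing that a lift of $f$ extends to the identity on $\partial\HH^2$ and hence centralizes the full deck group of $\HH^2 \to X$, so that $\bar f$ acts trivially on $\pi_1(X)$ and is isotopic to $\id_X$ by Dehn--Nielsen--Baer. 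One then lifts an isotopy of $X$ to a fiber-preserving isotopy of $S$ starting at $f$, and checks that it terminates at $\id_S$ rather than at a nontrivial deck transformation.

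Your outline for the solvable branched case --- inducting on derived length to reduce to cyclic covers --- is in the spirit of the original Birman--Hilden approach. But the same circularity from your unbranched step would propagate into the cyclic step, since you invoke the unbranched argument on $S \setminus A$. The equivariant extension across branch points is also more delicate than a one-line cone construction: one must first normalize the fiber-preserving isotopy on the punctured neighborhood so that it extends continuously over the puncture, which is an Alexander-trick-type argument rather than an automatic consequence of the local action being a rotation.
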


Maclachlan and Harvey were able to give the following generalization of Theorem \ref{BirmanHilden} \cite{MH}:

\begin{theorem}[Maclachlan-Harvey]\label{MacHarv}
Let $p \from S \to X$ be a finite-sheeted regular branched covering map where $S$ is a hyperbolic surface. Then $p$ has the Birman-Hilden property.
\end{theorem}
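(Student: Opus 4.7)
The plan is to reduce the statement to a question about finite group actions on Teichm\"uller space, and then invoke the faithful action of the mapping class group and a Nielsen-realization-type fixed-point theorem. Let $G$ denote the finite group of deck transformations of $p \from S \to X$, so that $X = S/G$ as an orbifold, and a homeomorphism $f \from S \to S$ is fiber-preserving precisely when it normalizes the $G$-action. What must be shown is that if a fiber-preserving $f$ is isotopic to the identity in $\Homeo(S)$, then it is fiber-preservingly (i.e., $G$-equivariantly) isotopic to the identity. A first reduction: since the map $\Homeo(S) \to \Mod(S)$ is continuous on isotopy classes and the finite group $G$ embeds in $\Mod(S)$ (using that $S$ is hyperbolic), any $f$ isotopic to the identity must commute with every element of $G$ on the nose up to isotopy, so one may assume $f$ is genuinely $G$-equivariant.

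Next I would move the problem to Teichm\"uller space. The group $G$ acts on $\mathcal{T}(S)$ by isometries in the Teichm\"uller metric, and by Kerckhoff's solution to the Nielsen realization problem (or the older Fenchel--Nielsen results in the cyclic case) the fixed locus $\mathcal{T}(S)^G$ is non-empty. One then identifies this fixed locus canonically with the Teichm\"uller space $\mathcal{T}(X)$ of the orbifold quotient, and checks that this identification is equivariant with respect to the natural map from the subgroup of $\Mod(S)$ preserving the $G$-action into the orbifold mapping class group $\Mod(X)$. This identification is classical and goes back to Kravetz; the key content is that a $G$-invariant conformal structure on $S$ descends to a conformal structure on the orbifold $X$.

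Now since $f$ is isotopic to the identity on $S$, it acts trivially on $\mathcal{T}(S)$, hence trivially on $\mathcal{T}(S)^G \cong \mathcal{T}(X)$. Because the action of $\Mod(X)$ on $\mathcal{T}(X)$ is faithful for all hyperbolic orbifolds $X$ of sufficient complexity (with a short separate check for the small exceptional cases), the image of $f$ in $\Mod(X)$ is trivial. In other words, the induced homeomorphism $\bar f$ on the quotient $X$ is isotopic to the identity through an isotopy $\bar F_t$ of $X$. The final step is to lift $\bar F_t$ to a $G$-equivariant isotopy $F_t$ on $S$ connecting the identity to $f$, using the covering homotopy property away from the branch points and a local model of the branched cover near each branch point to extend equivariantly across branch fibers.

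The main obstacle I expect is the equivariant lifting of the isotopy across the branch locus: in the unbranched or solvable-covering case (Birman--Hilden) this is comparatively direct, but in the general regular branched case one must use that the isotopy $\bar F_t$ on the orbifold can be arranged to have the correct infinitesimal behavior near each branch point so that a $G$-equivariant lift exists. This is essentially where Maclachlan and Harvey's argument gains over Birman and Hilden, and it is the step that relies crucially on the fact that the Teichm\"uller-space identification respects the orbifold structure at the branch points, not merely the underlying topological data.
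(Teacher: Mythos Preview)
The paper does not prove this theorem. It is stated as a known result of Maclachlan and Harvey with a citation to \cite{MH}, and then applied in the following paragraph to conclude that the map $\Theta \from \Map(S_{g,0}) \to \Map(S_{0,2g+2})$ is well defined. There is therefore no proof in the paper to compare your proposal against.

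That said, your sketch is broadly in the spirit of the original Maclachlan--Harvey argument, which also proceeds via the faithful action of the mapping class group on Teichm\"uller space and the identification of the $G$-fixed locus $\mathcal{T}(S)^G$ with the Teichm\"uller space of the quotient orbifold. One remark: you invoke Kerckhoff's solution to the Nielsen realization problem to obtain $\mathcal{T}(S)^G \neq \varnothing$, but this is unnecessary (and anachronistic, since Maclachlan--Harvey predates Kerckhoff). Here $G$ is given as the deck group of an honest branched cover, so $G$ is already realized by homeomorphisms of $S$; pulling back any complex structure on $X$ gives a $G$-invariant complex structure on $S$, hence a point of $\mathcal{T}(S)^G$. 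The substantive content is the faithfulness of the orbifold mapping class group action on $\mathcal{T}(X)$ and the lifting step, as you identify.
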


We are able to apply Theorem \ref{MacHarv} to the branched covering map $S_{g,0} \to S_{0,2g+2}$ to find that the map $\Theta$ is well defined. Therefore, every pseudo-Anosov mapping class on $S_{0,2g+2}$ lifts to a pseudo-Anosov mapping class on $S_{g,0}$. Indeed, consider a pseudo-Anosov map $\phi$ which was found using one of the constructions in either Section \ref{Sec:Main} or Section \ref{Sec:Modifications}. This map lifts to a map $\psi$ on $S_{g,0}$. Since $\psi$ has the same local properties of $\phi$, the stable and unstable foliations for $\psi$ are the preimages under $p$ for those of $\phi$. Therefore, $\psi$ is a pseudo-Anosov mapping class on $S_{g,0}$.

Similarly, it is possible to find that $S_{0,2g+3}$ and $S_{g,2}$ are related by a two-fold branched covering map $S_{g,2} \to S_{0,2g+3}$, so we may also lift the maps from odd-times punctured spheres to surfaces with higher genus.

\section{Number-Theoretic Properties}\label{Sec:Algebraic}

In this section, we will discuss the number-theoretic properties for some of the maps we are able to construct from Theorems \ref{partitions} and \ref{modification}.

The \textit{trace field} of a linear group is the field generated by the traces of its elements. In particular the trace field of a group $\Gamma \subset SL_2 (\mathbb{R})$ is the subfield of $\mathbb{R}$ generated by $tr(A)$, $A \in \Gamma$. Kenyon and Smillie proved that if the affine automorphism group of a surface contains an orientation preserving pseudo-Anosov element $f$ with largest eigenvalue $\lambda$, then the trace field is $\mathbb{Q}(\lambda + \lambda^{-1})$ \cite{KS}.

There has already been a considerable amount of research regarding the number-theoretic properties for both Penner and Thurston's constructions. For Penner's construction, Shin and Strenner were able to show that the Galois conjugates of the stretch factor are never on the unit circle \cite{SS}. For Thurston's construction, Hubert and Lanneau were able to show that the field $\mathbb{Q}(\lambda+1/\lambda)$ is totally real \cite{HL}.

In this section, we will show that some of the maps resulting from the constructions outlined in Theorem \ref{partitions} and Theorem \ref{modification} are such that the Galois conjugates of the stretch factor are on the unit circle, and that the field $\mathbb{Q}(\lambda+1/\lambda)$ is not totally real. This result shows that the construction outlined above cannot come from neither Penner nor Thurston’s construction. However, we show it is also possible to find maps resulting from these constructions where $\mathbb{Q} (\lambda + 1/\lambda)$ is totally real and that the Galois conjugates of the stretch factors are not on the unit circle, where $\mathbb{Q} (\lambda + 1/\lambda)$ is totally real and that the Galois conjugates of the stretch factors are on the unit circle, or that $\mathbb{Q} (\lambda + 1/\lambda)$ is not totally real and the Galois conjugates of the stretch factors are not on the unit circle.

We begin by examining the algebraic properties of the first map introduced in Example \ref{example6punctures}. For this map, we will see that the Galois conjugates are never on the unit circle, and that the field $\mathbb{Q}(\lambda+1/\lambda)$ is totally real.

\begin{example}
Consider the pseudo-Anosov map $\phi$ on $S_{0,6}$ introduced in Example \ref{example6punctures}. The characteristic polynomial of this map is
\[
p_{\phi} = (x-1)^2 (x+1)^2 ( x^2 - 18x +1).
\]
We notice that the leading eigenvalue $\lambda_{\phi}$ is a root of the factor $p_{\phi,\lambda}(x) = x^2 - 18x +1$, which is an irreducible polynomial with real roots. Since $1<\lambda_{\phi} \in \mathbb{R}$ is not on the unit circle, and therefore $\lambda^{-1}$ is also not on the unit circle, that the Galois conjugates of the stretch factor are not on the unit circle.

To show that $\mathbb{Q}(\lambda+1/\lambda)$ is totally real, we notice that we are able to write
\[
\frac{p_{\phi}}{x} = \left( x+ \frac{1}{x} \right) -18 = q\left( x+\frac{1}{x} \right).
\]
By considering the roots of $q(y) = y-18$, we notice that the only root is $y=18$ which implies that the field $\mathbb{Q}(\lambda+1/\lambda)$ is totally real. 
\end{example}

Next, we will provide an example where the Galois conjugates are never on the unit circle, and that the field $\mathbb{Q}(\lambda+1/\lambda)$ is not totally real.

\begin{example}
We consider the map $\bar{\phi '}$ from Example \ref{example7punctures}. We compute that the characteristic polynomial of this map is 
\[
p_{\bar{\phi}} = (x + 1) (x^3 - 15x^2 + 7x - 1) (x^3 - 7x^2 + 15x - 1).
\]
We notice that $\lambda$ is a root of the polynomial $p_{\bar{\phi},\lambda}(x) = x^3 - 15x^2 + 7x - 1$. The roots of this polynomial are
\begin{equation*}
5+\frac{1}{3}\sqrt[3]{2916-12\sqrt{19}}+\left( \frac{2}{3} \right)^{2/3}\sqrt[3]{243+\sqrt{93}},
\end{equation*}
\begin{equation*}
5+\frac{1}{3}(-1+i \sqrt{3})\sqrt[3]{2916-12\sqrt{19}} - \frac{(1+i\sqrt{3})(\sqrt[3]{1/2(243+\sqrt{93}}))}{3^{2/3}},
\end{equation*}
and
\begin{equation*}
5-\frac{1}{3}(-1+i \sqrt{3})\sqrt[3]{2916-12\sqrt{19}} + \frac{(1+i\sqrt{3})(\sqrt[3]{1/2(243+\sqrt{93}}))}{3^{2/3}}.\\
\end{equation*}
By unique factorization, we see that $p_{\bar{\phi},\lambda}(x) = x^3 - 15x^2 + 7x - 1$ is irreducible over $\mathbb{Q}$. None of the roots of $p_{\bar{\phi},\lambda}(x)$ are on the unit circle, so we have that there are no Galois conjugates of the stretch factor on the unit circle. We now consider the polynomial
\[
\left( \frac{1}{x^3}\right) (x^3 - 15x^2 + 7x - 1) (x^3 - 7x^2 + 15x - 1) = x^3-22x^2+127x-276.
\]
We are able to rewrite this polynomial as
\[
q\left( x+\frac{1}{x}\right) = \left( x+\frac{1}{x} \right)^3 -22 \left( x+\frac{1}{x} \right)^2 +124 \left( x+\frac{1}{x} \right) -232 .
\]
We calculate that the roots of the polynomial $q(y)$ are
\begin{equation*}
\frac{1}{3} \left( 22 + \sqrt[3]{1801-9\sqrt{26554}} + \sqrt[3]{1801+9\sqrt{26554}} \right),
\end{equation*}
\begin{equation*}
\frac{1}{6} \left( 44 + i(\sqrt{3}+i) \sqrt[3]{1801-9\sqrt{26554}} + (-1-i\sqrt{3}) \sqrt[3]{1801+9\sqrt{26554}} \right),
\end{equation*}
and
\begin{equation*}
\frac{1}{6} \left( 44 + (-1-i\sqrt{3}) \sqrt[3]{1801-9\sqrt{26554}} + i(\sqrt{3}+i) \sqrt[3]{1801+9\sqrt{26554}} \right).
\end{equation*}
By unique factorization, $q(y)$ is irreducible. Additionally, since two of the roots are imaginary, the field $\mathbb{Q}(\lambda+1/\lambda)$ is not totally real.
\end{example}

We now provide an example where there are Galois conjugates of the stretch factor on the unit circle, and that the field $\mathbb{Q}(\lambda+1/\lambda)$ is totally real.

\begin{example}
We now consider the pseudo-Anosov map $\psi = D_{5}^2D_{4}^2D_{8}^2D_{3}^2D_{7}^2D_{2}^2D_{6}^2D_{1}^2$, which is the first map from Example \ref{example6punctures} with the modification from Theorem \ref{modification} applied twice so that there are two partitions with one element each. The train track $\eta$ where $\psi (\eta)$ is carried by $\psi$ is depicted in Figure \ref{Fig:pA632mod2} .
\begin{figure}[h]
\setlength{\unitlength}{0.01\linewidth}
\begin{picture}(100,27)
\put(37,-6){\includegraphics[width=28\unitlength]{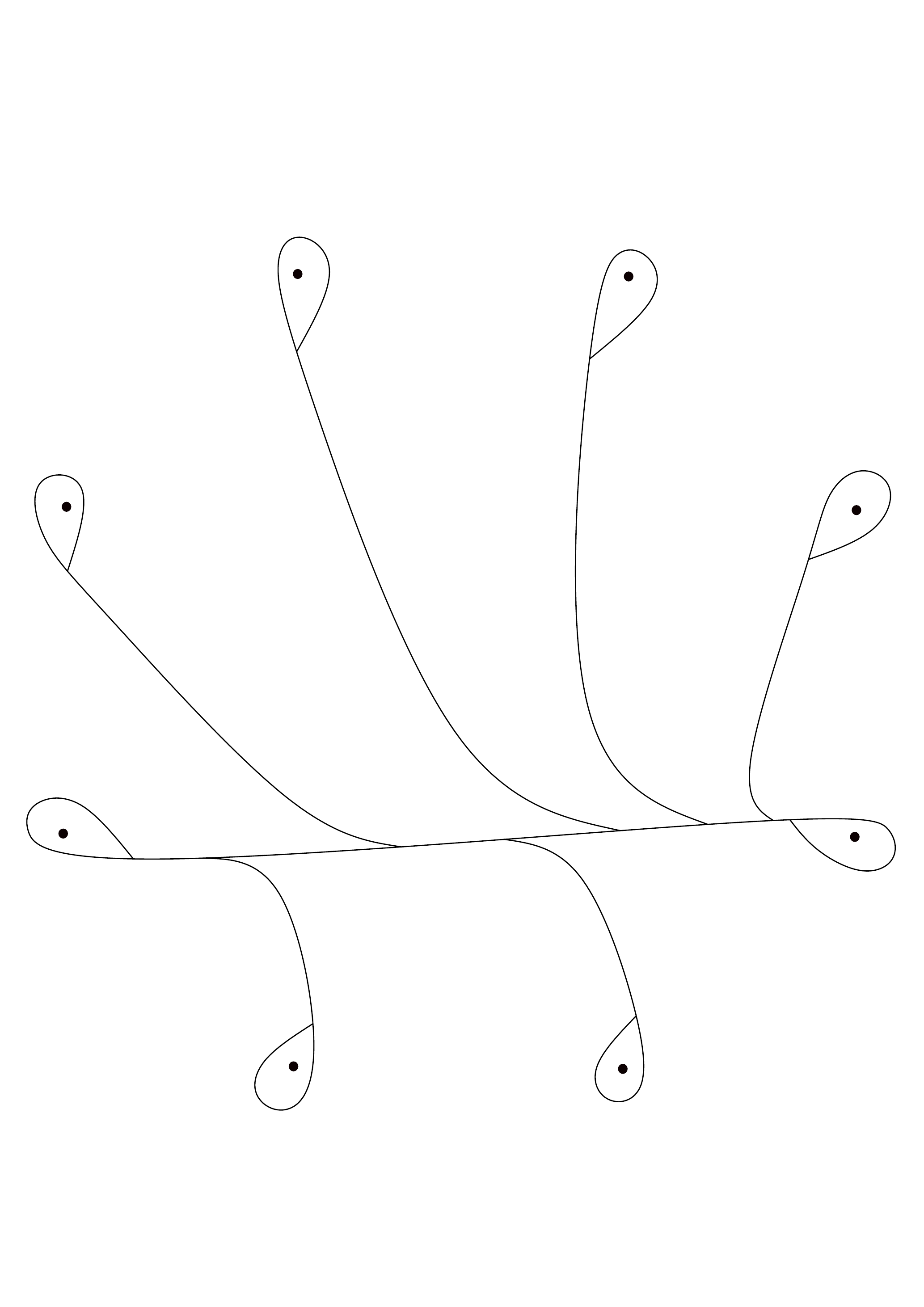}}
\end{picture} 
\caption{The train track associated to $\psi$.}
\label{Fig:pA632mod2} 
\end{figure}
The matrix associated to this map is
\[ M= \left( \begin{array}{cccccccc}
3 & 2 & 0 & 0 & 0 & 0 & 0 & 2 \\
6 & 3 & 2 & 4 & 0 & 0 & 0 & 4 \\
12 & 6 & 3 & 6 & 0 & 0 & 0 & 8\\
0 & 0 & 2 & 3 & 2 & 0 & 0 & 0 \\
0 & 0 & 4 & 6 & 3 & 2 & 0 & 0 \\
0 & 0 & 8 & 12 & 6 & 3 & 2 & 0 \\
4 & 0 & 16 & 24 & 12 & 6 & 3 & 2 \\
6 & 0 & 32 & 48 & 24 & 12 & 6 & 3 \end{array} \right)\] 
which has the characteristic polynomial
\[
p(x) = (x + 1)^4 (x^4 - 28x^3 + 6x^2 - 28x + 1).
\]
Our leading eigenvalue $\lambda$ is a root of $p_{\lambda,\psi}(x) = x^4 - 28x^3 + 6x^2 - 28x + 1$. The roots of this polynomial are
\[
\lambda^{-1} = 7+4\sqrt{3}-2\sqrt{24+14\sqrt{3}},
\]
\[
\lambda = 7+4\sqrt{3}+2\sqrt{24+14\sqrt{3}},
\]
\[
x_1 = 7-4\sqrt{3}-2i\sqrt{14\sqrt{3}-24},
\]
and
\[
x_2 = 7-4\sqrt{3}+2i\sqrt{14\sqrt{3}-24}.
\]
By unique factorization, we know that $p_{\lambda,\psi}(x) = x^4 - 28x^3 + 6x^2 - 28x + 1$ is irreducible over $\mathbb{Q}$. Notice that $|x_1| = 1$, and $|x_2|$=1, which we can verify by direct computation or by applying by Theorem 1 of \cite{KM}. This implies that there are Galois conjugates of the stretch factor on the unit circle. We now show that the field $\mathbb{Q}(\lambda+1/\lambda)$ is totally real by writing
\[
\frac{p_{\lambda,\psi}}{x} = \left( x+ \frac{1}{x} \right)^2 -28\left( x+ \frac{1}{x} \right) + 4 = q\left( x+\frac{1}{x} \right).
\]
We notice that the roots of $q(y) = y^2 - 28y +4$ are
\[
14-8\sqrt{3}
\]
and
\[
14+8\sqrt{3},
\]
which implies that $q(y)$ is irreducible by unique factorization. Additionally, since both roots are real we find that the field $\mathbb{Q}(\lambda+1/\lambda)$ is totally real.
\end{example}

Lastly, we provide an example where there are Galois conjugates of the stretch factor on the unit circle, and where the field $\mathbb{Q}(\lambda+1/\lambda)$ is not totally real.

\begin{example}\label{uniquepA}
We consider the second map from Example \ref{example6punctures} and apply the modification from Theorem \ref{modification} twice so that there are two partitions with one element each. This induces the map $\psi' = D_{4}^2D_{3}^2D_{8}^2D_{6}^2D_{2}^2D_{7}^2D_{5}^2D_{1}^2$. The train track $\eta'$ where $\psi' (\eta')$ is carried by $\psi'$ is depicted in Figure \ref{Fig:pA623mod2} .
\begin{figure}[h]
\setlength{\unitlength}{0.01\linewidth}
\begin{picture}(100,28)
\put(37,-6){\includegraphics[width=28\unitlength]{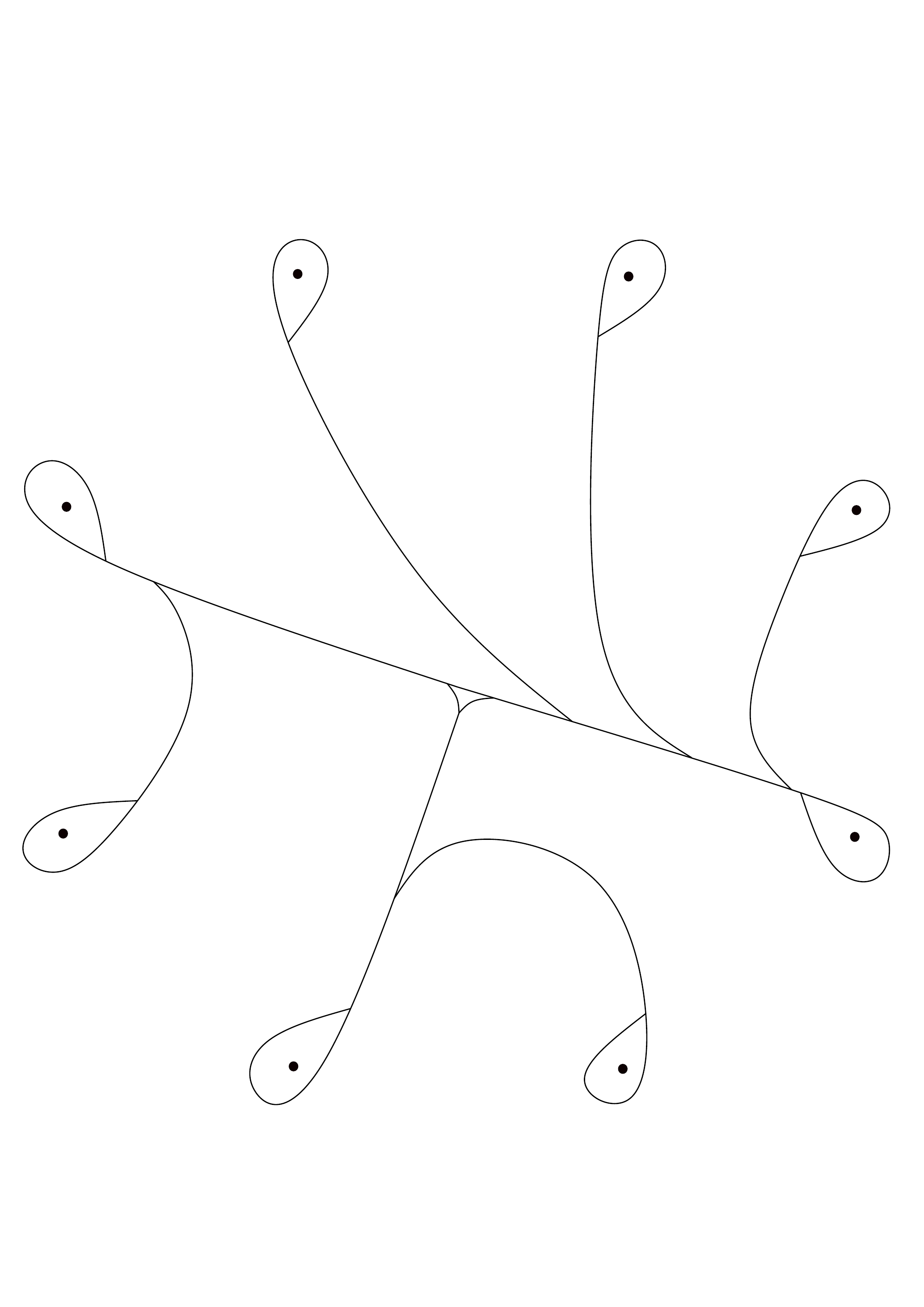}}
\end{picture} 
\caption{The train track associated to $\psi$.}
\label{Fig:pA623mod2} 
\end{figure}
The matrix associated to this map is
\[ M= \left( \begin{array}{cccccccc}
3 & 2 & 0 & 0 & 0 & 0 & 0 & 2 \\
6 & 3 & 2 & 0 & 0 & 0 & 0 & 4 \\
12 & 6 & 3 & 2 & 4 & 0 & 0 & 8\\
24 & 12 & 6 & 3 & 6 & 0 & 0 & 16 \\
0 & 0 & 0 & 2 & 3 & 2 & 4 & 0 \\
0 & 0 & 0 & 4 & 6 & 3 & 6 & 0 \\
4 & 0 & 0 & 0 & 0 & 2 & 3 & 2 \\
6 & 0 & 0 & 0 & 0 & 4 & 6 & 3 \end{array} \right)\] 
which has the characteristic polynomial 
\[
p_{\psi '}(x) = x^8 - 24x^7 + 156x^6 - 424x^5 - 186x^4 - 424x^3 + 156x^2 - 24x + 1.
\]
This polynomial is irreducible, proof located in Appendix \ref{irreducibility}, and by Theorem 1 of \cite{KM} we find that there are roots of this polynomial which are on the unit circle. We now write
\[
\frac{p_{\psi '}}{x^4} = \left( x+\frac{1}{x} \right) ^4 -24 \left( x+\frac{1}{x} \right) ^3 +152 \left( x+\frac{1}{x} \right) ^2-352 \left( x+\frac{1}{x} \right) -496.
\]
We are able to prove that $q(y) = y^4 -24y^3 + 152y^2 -352y - 496$ is an irreducible polynomial as follows. By Gauss's lemma, a primitive polynomial is irreducible over the integers if and only if it is irreducible over the rational numbers. Since $q(y)$ is primitive, it suffices to show that $q(y)$ is irreducible over the integers. The rational root theorem gives us that $q(y)$ has no roots, so if it is reducible then $q(y) = (y^2+ay+b)(y^2+cy+d)$. Therefore, suppose that $q(y) = (y^2+ay+b)(y^2+cy+d)$. Expanding gives rise to the system of equations
\begin{equation}
\begin{aligned}
a + c &= -24\\
ac+ b + d &= 152\\
ad + bc &= -352\\
bd &= -496
\end{aligned}
\end{equation}
Substituting $a = -24 - c$ and $b = \frac{-496}{d}$ into the second and third equations give
\begin{equation}
\begin{aligned}
-24c - c^2 - \frac{496}{d} + d &= 152\\
(-24 - c)d - \frac{496c}{d} & = -352
\end{aligned}
\end{equation}
We solve for $c$ in the second equation to find
\[
c = \frac{24d^2 - 352 d}{-d^2 - 496}.
\] 
Substituting this into the first equation gives
\[
-24 \left( \frac{24d^2 - 352 d}{-d^2 - 496} \right) d - \left(\frac{24d^2 - 352 d}{-d^2 - 496}\right)^2d + d^2 - 152 d - 496 = 0,
\]
which has no integer roots. Therefore, there is no $d$ satisfying the conditions we require, therefore $q(y)$ is irreducible.
Finally, by using the formulas for the roots of a quartic equation, we see that $q(y)$ has two imaginary roots, therefore the field $\mathbb{Q}(\lambda+1/\lambda)$ is not totally real.
\end{example}

\begin{proof}[Proof of Theorem \ref{uniqueconstruction}]
Consider the map $\psi'$ from Example \ref{uniquepA}. For Thurston's construction Hubert and Lanneau proved that the field $\mathbb{Q}(\lambda+1/\lambda)$ is always totally real \cite{HL}, and for Penner's construction Shin and Strenner proved the Galois conjugates of the stretch factor are never on the unit circle \cite{SS}. In Example \ref{uniquepA}, it was shown that the map $\psi'$ has a trace field which is not totally real and there are Galois conjugates of the stretch factor on the unit circle. Therefore, this mapping class is unable to come from either Thurston's or Penner's constructions.
\end{proof}


\begin{appendices}

\section{Irreducibility of Polynomials}\label{irreducibility}

Here we present the proof of the irreducibility of the polynomial $p_{\psi '}(x) = x^8 - 24x^7 + 156x^6 - 424x^5 - 186x^4 - 424x^3 + 156x^2 - 24x + 1$ over $\mathbb{Q}$.

\begin{theorem}
The polynomial
\[
p(x) = x^8 - 24x^7 + 156x^6 - 424x^5 - 186x^4 - 424x^3 + 156x^2 - 24x + 1
\]
is irreducible over $\mathbb{Q}$.
\end{theorem}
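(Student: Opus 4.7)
The plan is to exploit the fact that $p(x)$ is a reciprocal (palindromic) polynomial of degree $8$, so we can write
\[
p(x) = x^4 \, q\!\left(x + \tfrac{1}{x}\right), \qquad q(y) = y^4 - 24y^3 + 152y^2 - 352y - 496,
\]
which is exactly the auxiliary quartic whose irreducibility over $\mathbb{Q}$ was already established in Example \ref{uniquepA}. I will then leverage the irreducibility of $q$ to pin down $p$ via a tower-of-fields argument, ruling out the single way the factorization could fail.

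Fix any root $\alpha$ of $p$ and set $y_0 = \alpha + 1/\alpha$; then $y_0$ is a root of $q$. Let $K = \mathbb{Q}(y_0)$. Since $q$ is the minimal polynomial of $y_0$, we have $[K : \mathbb{Q}] = 4$. The element $\alpha$ satisfies the quadratic relation $\alpha^2 - y_0 \alpha + 1 = 0$ over $K$, so $[\mathbb{Q}(\alpha) : K] \in \{1,2\}$. If this degree equals $2$, then $[\mathbb{Q}(\alpha) : \mathbb{Q}] = 8$, so the minimal polynomial of $\alpha$ over $\mathbb{Q}$ has degree $8$; as it divides the degree-$8$ polynomial $p$, it must equal $p$, yielding irreducibility. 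The only thing left to verify is that $[\mathbb{Q}(\alpha):K] \neq 1$, which is equivalent to showing that the discriminant $y_0^2 - 4$ of the quadratic $x^2 - y_0 x + 1$ is not a square in $K$.

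I will dispatch this last point by a norm computation. If $y_0^2 - 4 = \beta^2$ for some $\beta \in K$, then $N_{K/\mathbb{Q}}(y_0^2-4) = N_{K/\mathbb{Q}}(\beta)^2$ would be a non-negative rational square. Writing $q(y) = \prod_{i=1}^{4}(y - y_i)$ over the splitting field (where the $y_i$ are the Galois conjugates of $y_0$), the norm factors as
\[
N_{K/\mathbb{Q}}(y_0^2 - 4) \;=\; \prod_{i=1}^{4}(y_i - 2)(y_i + 2) \;=\; q(2)\,q(-2) \;=\; (-768)(1024) \;=\; -786432,
\]
which is strictly negative and hence not a square in $\mathbb{Q}$, a contradiction. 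The only potentially delicate step is the irreducibility of $q$, already handled in the paper; the rest of the argument collapses to the single sign check $q(2)\,q(-2) < 0$, which is where I expect the whole proof to pivot.
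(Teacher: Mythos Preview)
Your argument is correct and takes a genuinely different, much shorter route than the paper's own proof. The paper proceeds by an exhaustive case analysis on the possible degree splittings $1+7$, $2+6$, $3+5$, $4+4$, using the palindromic structure of $p$ to constrain the shape of the factors and then solving the resulting systems of equations in the integer coefficients. You instead exploit the palindromicity once and for all via the substitution $p(x)=x^4 q(x+1/x)$, invoke the irreducibility of $q$ already proved in Example~\ref{uniquepA}, and reduce everything to the tower $\mathbb{Q}\subset\mathbb{Q}(y_0)\subset\mathbb{Q}(\alpha)$; the only nontrivial check is that $y_0^2-4$ is not a square in $K=\mathbb{Q}(y_0)$, which you settle in one line by the norm computation $N_{K/\mathbb{Q}}(y_0^2-4)=q(2)\,q(-2)=-786432<0$. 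Your approach is more conceptual and vastly more efficient (a single sign check replaces several pages of case work), and it generalises immediately to any reciprocal polynomial of even degree once the associated half-degree polynomial is known to be irreducible. The paper's approach has the minor advantage of being entirely elementary (no field norms), but that is more than offset by its length.
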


\begin{proof}
By Gauss's lemma, a primitive polynomial is irreducible over the integers if and only if it is irreducible over the rational numbers. Since $p(x)$ is primitive, it suffices to show that $p(x)$ is irreducible over the integers.

By the rational root theorem, the only rational roots of the polynomial may be $\pm 1$, however, neither are a root of $p(x)$. This implies that the polynomial $p(x)$ has no linear factors.

Additionally, note that for any palindromic polynomial, if $x$ is a root then its inverse must also be a root of the polynomial. This is a fact which will be used often.

Suppose now that $p(x)$ factors as $p(x) = f(x) g(x)$ where $deg(f(x)) = 2$ and $deg(g(x))=6$. Since we are showing irreducibility over the integers and the leading coefficient and the constant term of $p(x)$ are $1$, we see that $f(x) = x^2 + ax + 1$ or $f(x) = x^2 + ax -1$.

Suppose first that $f(x) = x^2 + ax + 1$ and $g(x) = x^6 + bx^5 + cx^4 + dx^3 + ex^2 + fx + 1$. This gives rise to the system of equations
\begin{equation}
\begin{aligned}
a+b &= -24\\
c+ab+1 &= 156\\
d+ac+b &= -424\\
e+ad+c &= -186\\
f+ae+d &= -424\\
a+af+e &= 156\\
a+f &= -24
\end{aligned}
\end{equation}
Using the first and last equations in the system, we can find that $b=f$, and therefore $a = -24 - f$. Inputting this information into the second equation will give that $c = 155 + 24f +f^2$, and similarly $e = 155 + 24f + f^2$. We then input this information into the third equation where we find that $d = f^3+48f^2+720f + 3296$. Inputting all of these equations into the fourth equation, we see that
\[
f^4 + 72f^3 + 1880f^2 + 20792f + 78584 = 0.
\]
By the rational root theorem, there are no rational roots of the equation above, which implies that $f$ is not a rational number, which is a contradiction.

Recall that for a palindromic polynomial, the inverse of each root is also a root of the polynomial. If $f(x) = x^2 + ax - 1$, then the roots of $f$ are not able to be inverses of each other as the constant term is $-1$. Therefore, the inverses of the two roots of $f(x)$ are roots of $g(x)$, which implies that $g(x)$ is divisible by $\bar{f}(x) = x^2 -ax -1$, so $p(x) = f(x) \bar{f}(x) h(x)$, where $h(x) = x^4 + bx^3 + cx^2 + dx + 1$. Expanding this, we find the system of equations
\begin{equation}
\begin{aligned}
b &= -24\\
-a^2 + c - 2 &= 156\\
-a^2b -2b +d &= -424\\
-a^2c - 2c + 2 &= -186\\
-a^2d +b -2d &= -424\\
-a^2 +c - 2 &= 156\\
d &= -24
\end{aligned}
\end{equation}
Inputting that $b=-24$ and $d=-24$ into the third equation immediately implies that $a^2 = -\frac{56}{3}$, which implies $a$ is not a rational number which is a contradiction.

Now suppose that $p(x)$ factors as $p(x) = f(x)g(x)$ where $deg(f(x)) = 3$ and $deg(g(x))=5$. Suppose that two of the roots of $f$ were each others’ inverses. Then the third root of $f$ would have to be $\pm 1$ since the constant term of $f(x)$ must be either $\pm1$. However, this is not possible as neither of $\pm1$ are roots of $p(x)$. Therefore, if $x$ is a root of $f(x)$, the inverse of $x$ will not be a root of $f(x)$. This implies that all the inverses of roots of $f(x)$ must be a root of $g(x)$. This implies that we have two possible cases. Either
\[
\text{i) } p(x) = (x^3+ax^2+bx+1)(x^3+bx^2+ax+1)(x^2+cx+1)
\]
or
\[
\text{ii) } p(x) = (x^3+ax^2+bx-1)(x^3-bx^2-ax-1)(x^2+cx+1).
\]

Suppose first that 
\[
p(x) = (x^3+ax^2+bx+1)(x^3+bx^2+ax+1)(x^2+cx+1).
\]
This gives rise to the following system of equations:
\begin{equation}
\begin{aligned}
a + b + c &= -24\\
ab + bc + ac + a + b + 1 &= 156\\
a^2 + b^2 + abc + ac + bc + a + b + 2 &= -424\\
a^2c + b^2c + 2ab + 2a + 2b + 2c &= -186\\.
\end{aligned}
\end{equation}
The first equation implies that $a = -24 - b - c$, which input into the second equation implies 
\[
-b^2-bc-24b-c^2-25c-179=0
\]
or that
\[
b = \frac{1}{2}(-\sqrt{-3c^2-52c-140}-c-24)
\]
or 
\[
b = \frac{1}{2}(\sqrt{-3c^2-52c-140}-c-24).
\]
We notice that this implies that $-14<c<-\frac{10}{3}$. Now we input $a = -24 - b - c$ into the third equation to find
\begin{equation}\label{irred1}
b^2(2-c)+(48-22c-c^2)b+23c+978=0.
\end{equation}
Inputting 
\[
b = \frac{1}{2}(-\sqrt{-3c^2-52c-140}-c-24)
\]
into Equation \ref{irred1} we find that the only real root of the equation is
\[
c = \frac{1}{3} \left( -23 - \frac{73}{\sqrt[3]{4805-96\sqrt{2463}}} - \sqrt[3]{4805-96\sqrt{2463}} \right),
\]
which is less than $-14$ and out of the domain of $c$. Similarly, by inputting
\[
b = \frac{1}{2}(\sqrt{-3c^2-52c-140}-c-24)
\]
into Equation \ref{irred1} we find that the only real root of the equation is
\[
c = \frac{1}{3} \left( -23 - \frac{73}{\sqrt[3]{4805-96\sqrt{2463}}} - \sqrt[3]{4805-96\sqrt{2463}} \right),
\]
which is less than $-14$ and out of the domain of $c$. 

Now suppose that
\[
p(x) = (x^3+ax^2+bx-1)(x^3-bx^2-ax-1)(x^2+cx+1).
\]
Expanding, we find the system of equations
\begin{equation}
\begin{aligned}
a - b + c &= -24\\
-ab - a + ac - bc + b + 1 &= 156\\
-a^2 - abc - ac + a - b^2 + bc - b &= -424\\
-a^2c - 2ab + b^2c - 2a &= -186\\
-a^2 - abc - ac - a - b^2 - bc - b &= -424\\
-ab - a - ac - bc - b + 1 &= 156\\
-a + b + c &= -24
\end{aligned}
\end{equation}
Adding together the first and last equations, we find that $c = -24$. Inputting this back into the first equation, we find that $a = b$. Inputting this information into the second equation, we find that $a^2 = -155$, which is not rational and therefore a contradiction.

Finally, suppose that $p(x)$ factors as $p(x) = f(x)g(x)$ where $deg(f(x)) = 4$ and $deg(g(x))=4$. Either, we have
\[
\text{i) } p(x) = (x^4+ax^3+bx^2+cx+1)(x^4+dx^3+ex^2+fx+1)
\]
or
\[
\text{ii) } p(x) = (x^4+ax^3+bx^2+cx-1)(x^4+dx^3+ex^2+fx-1).
\]

First we consider the case where $p(x) = (x^4+ax^3+bx^2+cx+1)(x^4+dx^3+ex^2+fx+1)$. Since $p(x)$ is a palindrome, this implies that $x^8 f(\frac{1}{x}) = p(x)$, therefore we need to have $(x^4+cx^3+bx^2+ax+1)(x^4+fx^3+ex^2+dx+1)=(x^4+ax^3+bx^2+cx+1)(x^4+dx^3+ex^2+fx+1)$. This gives rise to the following two potential subcases. Either
\[
p(x) = (x^4+ax^3+bx^2+ax+1)(x^4+dx^3+ex^2+dx+1)
\]
or
\[
p(x) = (x^4+ax^3+bx^2+cx+1)(x^4+cx^3+bx^2+ax+1).
\]
Suppose first that $p(x) = (x^4+ax^3+bx^2+ax+1)(x^4+dx^3+ex^2+dx+1)$. Expanding, we find the system of equations
\begin{equation}
\begin{aligned}
a + d &= -2\\
ad + b + e &= 156\\
a + ae + bd + d &= -424\\
2ad + be + 2 &= -186\\
\end{aligned}
\end{equation}
The first equation implies that $a = -24 - d$. Inputting this into the second equation we will find that $b = 156 - e + (24+d)d$. Now we can use these two equations along with the third equation to find that
\[
e = d^2 + 24d + \frac{418}{3}.
\]
Inputting all of this into the final equation we find that either
\[
d = -12 - i\sqrt{\frac{1783}{66}}
\]
or
\[
d = -12 + i\sqrt{\frac{1783}{66}},
\]
both of which are imaginary, which is a contradiction.

Now suppose that $p(x) = (x^4+ax^3+bx^2+cx+1)(x^4+cx^3+bx^2+ax+1)$. Expanding, we find the system of equations
\begin{equation}
\begin{aligned}
a + c &= -24\\
ac + 2b &= 156\\
a + ab + bc + c &= -424\\
a^2 + b^2 + c^2 + 2 &= -186\\
\end{aligned}
\end{equation}
The first equation gives $a = -24 - c$, which input into the second equation gives $b = \frac{156+(24+c)c}{2}$. Inputting these equations into the third and solving for c gives
\[
c = \frac{8}{\sqrt{3}} - 12
\]
which is an irrational number, which is a contradiction.

Finally, consider the case where $p(x) = (x^4+ax^3+bx^2+cx-1)(x^4+dx^3+ex^2+fx-1)$. Expanding, we find the system of equations
\begin{equation}
\begin{aligned}
a + d &= -24\\
ad + b + e &= 156\\
ae + bd + c + f &= -424\\
af + be + cd &= -186\\
-a + bf + ce - d &= -424\\
-b + cf - e &= 156\\
-c - f &= -24
\end{aligned}
\end{equation}
The first equation gives $a = -24 - d$, which input into the second equation gives $156 - e + (24+d)d$. The last equation gives $c = 24 - f$, which all input into the sixth equation gives
\[
f = 12 \pm \sqrt{-d^2 -24d -168}.
\]
This implies that we must have a value of $d$ such that $-d^2 -24d -168 > 0$, which implies that $-\frac{1}{24}(d+12)^2>1$. However, there is no real value of $d$ which satisfies this inequality.
\end{proof}

\end{appendices}



\end{document}